\documentclass{amsart}

\usepackage[english]{babel}
\usepackage{amsmath}

\newtheorem{propo}{Proposition}[section]

\newtheorem{lemma}[propo]{Lemma}

\newtheorem{theo}[propo]{Theorem}

\newtheorem{prob}[propo]{Problem}

\newcommand{\ra}{ \rightarrow }
\newcommand{\Ker}{\mathop{\rm Ker}\nolimits}
\newcommand{\ZZ}{\mathbb{Z}}
\newcommand{\G}{\mathbf{G}}
\newcommand{\Sym}{\mathbb{S}}
\newcommand{\Alt}{\mathbb{A}}
\newcommand{\Q}{\mathbb{Q}}
\newcommand{\lam}{\lambda } 

\newcommand{\ep}{\varepsilon}
\newcommand{\Irr}{\mathop{\rm Irr}\nolimits}
\newcommand{\IBr}{\mathop{\rm IBr}\nolimits}
\newcommand{\ld}{,\ldots , }

\begin{document}

\title[Characters of  simple
groups constant at the $p$-singular elements]{Irreducible characters of finite simple
groups constant at the $p$-singular elements}

\author{M.A. Pellegrini}
\address{Dipartimento di Matematica e Fisica, Universit\`a Cattolica del Sacro Cuore, Via Musei 41,
25121 Brescia, Italy}
\email{marcoantonio.pellegrini@unicatt.it}

\author[A. Zalesski]{A. Zalesski}
\address{Academy of Sciences of Belarus,
Minsk, Prospekt Nezalejnasti 66, Minsk, 220000, Belarus \and 
University of East Anglia, Norwich, University plain, NR4 7TJ, UK}
\email{alexandre.zalesski@gmail.com}

\keywords{Chevalley groups; Alternating groups; Irreducible characters; Principal block}
\subjclass[2010]{20C15, 20G40}

\begin{abstract}
In representation theory of finite groups an important role is played by irreducible 
characters of $p$-defect $0$, 
for a prime $p$ dividing the group order. These are exactly those vanishing 
at the $p$-singular elements.
In this paper we generalize this notion investigating the irreducible characters 
that  are constant at the $p$-singular elements. We determine all such characters 
of non-zero defect for alternating, symmetric and sporadic simple groups. 

We also  classify the irreducible characters  of quasi-simple groups of Lie type that are
constant at the non-identity unipotent elements.
In particular, we show that for groups of BN-pair rank greater than $2$ the Steinberg
and
the trivial characters are the only characters in question.
Additionally, we determine all irreducible characters whose degrees differ by $1$ from
the degree of the Steinberg character.
\end{abstract}

\maketitle

\section{Introduction}

Local representation theory studies properties of group representations  depending on 
a prime $p$ dividing the order of a finite group $G$ and the structure of a  Sylow $p$-subgroup $S$ of $G$.  
Denote by $\varSigma_p(G)$ the set of all $p$-singular elements of $G$, that is, those  of
order divisible by $p$. 
In this theory  a prominent role is played by irreducible characters
of defect $0$. These are exactly those vanishing at $\varSigma_p(G)$. 
In this paper we study   irreducible characters that are constant at $\varSigma_p(G)$. 
 We
call such characters \emph{$p$-constant}.

Although $p$-constant characters are very natural as a generalization of those of defect
$0$, they  do not seem to be discussed in the literature.

 If $G$ has a single conjugacy class of $p$-singular elements
then  every irreducible character of $G$ is $p$-constant. 
Groups $G$ with single  class of non-trivial $p$-elements are studied
in \cite{KN}.  
Also, the trivial character is
$p$-constant. It is less obvious that for $p>2 $
 non-exceptional characters in the principal  block with
cyclic defect are $p$-constant (see Theorem \ref{th2} below). We  mention \cite{NR} where
the
authors study irreducible characters
whose values at the $p$-singular elements are roots of unity, mainly for $p$-solvable
groups.

In this paper we focus mainly on quasi-simple groups and in view of Lemma
\ref{cov} below, we can concentrate on simple groups. Our main result is that on
classification 
of all $p$-constant irreducible characters for quasi-simple groups of Lie type with
defining characteristic $p$.  Following \cite[1.17]{Ca}, a finite group of Lie
type is the group of the fixed points of a (non-necessarily standard) Frobenius map acting
on a connected reductive group. (The simple group $^2F_4(2)'$ will be considered in Section \ref{Sspor} together with  
the sporadic groups).
Note that
among the quasi-simple groups of Lie type, only 
$SL_2(q)$ with $q$ even has a single class of non-identity $p$-elements.
Recall that, for every quasi-simple group $G$ of Lie type of characteristic $p$, the
Steinberg character is the only irreducible character of $G$ of $p$-defect $0$.

\begin{theo}\label{th1}
Let $G$ be a quasi-simple finite group of Lie type of characteristic $p$ and let $\tau$ be an irreducible character of $G$. 
Then $\tau$ is $p$-constant if, and only if, one of the following holds:
\begin{itemize}
\item[(1)] $\tau$ is the Steinberg character of $G$ or $\tau=1_G$;
\item[(2)] $G\in \{SL_2(q), SL_3(q), SU_3(q), {}^2B_2(q^2), {}^2G_2(q^2) \}$ and $\tau(1)=|G|_p\pm 1$.
More precisely, $\tau(1)\neq |G|_p-1$ if $G\in\{SU_3(q),{}^2B_2(q^2),{}^2G_2(q^2)\}$, and 
$\tau(1)\neq |G|_p+1$ if $G=SL_3(q)$.
\end{itemize}
\end{theo}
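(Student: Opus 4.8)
The plan is to split the proof into an elementary reduction and a hard structural core.

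\emph{Step 1 (reduction via a Sylow $p$-subgroup).} Being constant on $\varSigma_p(G)$, $\tau$ is constant --- with value $c$, say --- on the non-identity unipotent elements; and $c\in\ZZ$ because that set is a union of classes permuted by $\mathrm{Gal}(\Q(\zeta_{p^\infty})/\Q)$. Let $U$ be a Sylow $p$-subgroup of $G$ (a maximal unipotent subgroup). The character $\tau|_U$ takes the value $\tau(1)$ at $1$ and $c$ elsewhere, so $\tau|_U=c\cdot 1_U+m\cdot\mathrm{reg}_U$ where $m:=(\tau(1)-c)/|G|_p$; from $\langle\tau|_U,1_U\rangle=c+m\in\ZZ_{\ge 0}$ we get $m\in\ZZ$ and $c\ge -m$. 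If $m=0$ then $U\le\Ker\tau$, so $\tau=1_G$ (as $G$ is quasi-simple). Otherwise $m\ge 1$; pairing $\tau|_U$ with a linear character $\psi$ of $U$ in general position gives $\langle\tau,\mathrm{Ind}_U^G\psi\rangle=m$, and since the Gelfand--Graev character $\mathrm{Ind}_U^G\psi$ is multiplicity-free, $m=1$. Hence, for $\tau\ne 1_G$, $\tau(1)=|G|_p+c$ with $c\ge -1$ and $\tau|_U=c\cdot 1_U+\mathrm{reg}_U$. If $c=0$ then $\tau$ has $p$-defect $0$, so $\tau=\mathrm{St}$; this settles case (1), and I may assume $c\ne 0$, i.e.\ $c=-1$ or $c\ge 1$.

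\emph{Step 2 (structure and reformulation).} Since $\mathrm{St}$ also has $p$-defect $0$ we have $\mathrm{St}|_U=\mathrm{reg}_U$, so the virtual character $\eta:=\tau-\mathrm{St}-c\cdot 1_G$ vanishes on every unipotent element and --- using that $\tau$ is $p$-constant while $\mathrm{St}$ vanishes on all of $\varSigma_p(G)$ --- on the whole of $\varSigma_p(G)$, with $\eta(1)=0$. Thus $\eta$ is a $\ZZ$-combination of projective indecomposable characters of degree $0$; projecting onto $p$-blocks and using that the Steinberg block is $\{\mathrm{St}\}$, one gets in particular that $\tau$ lies in the principal block. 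The decisive point, however, is that ``$\tau$ constant on the non-identity unipotents'' is a very restrictive condition, and the heart of the proof is to classify all $\tau\in\Irr(G)$ with this property. I would do this through Deligne--Lusztig theory: the value of $R_T^\theta$ at a unipotent element is a Green function, independent of $\theta$, so, expanding $\tau$ in Deligne--Lusztig characters and using Lusztig's parametrization of $\Irr(G)$, the Jordan decomposition of characters and the known generic degrees, constancy on the unipotents forces $\tau$ to be extremely special.

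\emph{Step 3 (rank reduction and the small cases).} A natural device is Harish--Chandra restriction: for a proper parabolic $P=LU_P$ one computes $({}^*R_L^G\tau)(\ell)=c$ for every non-identity unipotent $\ell\in L$ and $({}^*R_L^G\tau)(1)=|L|_p+c$, so the ``$\mathrm{St}_L$-shifted-by-$c$'' shape on unipotents is inherited by ${}^*R_L^G\tau$. Analysing which virtual characters of $L$ --- and which of their irreducible constituents --- can have this shape when $L$ has small relative rank, I expect to reach a contradiction whenever the BN-pair rank of $G$ exceeds $2$, leaving only $1_G$ and $\mathrm{St}$ in that range. For BN-pair rank at most $2$ one is reduced to a short list of families, and a direct inspection of their classically known generic character tables shows that the only possibilities with $c\ne 0$ have $c=\pm 1$, i.e.\ $\tau(1)=|G|_p\pm 1$, occurring precisely for $SL_2(q)$, $SL_3(q)$, $SU_3(q)$, ${}^2B_2(q^2)$, ${}^2G_2(q^2)$. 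Finally, for each of these five families one reads off from the table which sign of $c$ actually yields a character constant on \emph{all} of $\varSigma_p(G)$, not merely on its unipotent elements --- this produces the exclusions in the ``More precisely'' clause --- and conversely checks that such characters are genuinely $p$-constant; together with the trivial verifications for $1_G$ (value $1$ everywhere) and $\mathrm{St}$ (defect $0$), this gives the ``if'' direction.

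The main obstacle is Step 3: both the uniform argument in BN-pair rank $\ge 3$ and, especially, the rank-$2$ classification require pushing the combinatorics of Green functions and of Lusztig's Jordan decomposition through the relevant types one by one ($SL_n$, $SU_n$ for small $n$, $Sp_4$, $G_2$, ${}^3D_4$, ${}^2F_4$, and so on), with careful attention to non-connected centre (several Gelfand--Graev characters), to the twisted types, and to the finitely many small $q$. I anticipate the rank-$2$ bookkeeping and the small exceptional cases to be the most delicate points.
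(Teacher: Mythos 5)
Your Step 1 is sound and is essentially the paper's own first move (their Lemma \ref{22}): the integrality of $c$, the decomposition $\tau|_U=c\cdot 1_U+m\cdot\mathrm{reg}_U$, and the use of multiplicity-freeness of Gelfand--Graev characters to force $m=1$, hence $\tau(1)=|G|_p+c$ and $\tau$ regular. But from that point on your proposal is a plan rather than a proof, and it misses the two ingredients that make the classification tractable. First, you never pin down $c$: your argument only gives $c=-1$ or $c\geq 1$. The paper closes this immediately by citing the fact (\cite[Theorem 8.3.3(i)]{Ca}) that the average value of an irreducible character over the set of regular unipotent elements is $0$ or $\pm 1$ when $Z(\G)$ is connected; since $\tau$ is constant on $U\setminus\{1\}$, that average equals $c$, so $c=\pm 1$ (with a regular embedding handling disconnected centre). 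Second, once $\tau(1)=|G|_p\pm 1$ is known, the paper does not need any Deligne--Lusztig or Harish--Chandra analysis at all: it simply uses that $\tau(1)$ divides $|G|$ and a Zsigmondy-prime computation (Lemmas \ref{-1a} and \ref{+1a}) to show that $|G|_p\pm 1$ divides $|G|$ only for a short explicit list of groups ($A_1$, $A_2$, ${}^2A_2$, $A_3(2)$, $B_2/C_2$, $G_2$, ${}^2B_2$, ${}^2G_2$), after which ordinary character tables (and \cite{PZ} for the $c=-1$ case) finish the job.

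By contrast, your Steps 2--3 propose to classify the relevant $\tau$ via Green functions, Jordan decomposition and a Harish--Chandra rank reduction, and you acknowledge that this is the ``main obstacle.'' As written it does not go through: the shape condition you derive for ${}^*R_L^G\tau$ on the unipotent elements of $L$ is a condition on a \emph{sum} of irreducible characters of $L$ (with unknown multiplicities), not on an irreducible one, so it does not reduce the problem to the same statement in lower rank; and the asserted ``direct inspection of generic character tables'' for all rank-$\leq 2$ families (including ${}^2F_4(q^2)$ and ${}^3D_4(q)$, which your final list silently discards) is exactly the content that needs proving. So the hard half of the theorem --- that only the five listed families occur, and with the stated signs --- is not established by your argument, whereas the paper's divisibility-plus-Zsigmondy device settles it in a page.
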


One can be interested with the other quasi-simple groups. We state the following.

\begin{prob}\label{prob1}
Let $G$ be a finite quasi-simple group. Determine 
the irreducible characters $\tau$ of $G$ for which there exists a constant $c\neq 0$ such
that $\tau(g)=c$ for all
$g\in \varSigma_p(G)$. 
\end{prob}

Toward this problem, we have the following technical but useful observations.
Recall that, when $G$ has cyclic Sylow $p$-subgroups, $\Irr G$ consists of so
called
exceptional and non-exceptional characters, see
\cite[Ch. VII]{Fe}. 

\begin{theo}\label{th2}
Let $G$ be a finite group with Sylow $p$-subgroup $S$,
and let $B$ be the principal $p$-block of $G$.
\begin{itemize}
\item[(1)]  If $\chi$ is an irreducible
$p$-constant character of non-zero defect, then $\chi$ belongs to $B$.
\item[(2)] Assume further that the  defect group $S$ of $B$ is cyclic and that $B$
contains $d$ ordinary exceptional characters.  Let $\chi\neq 1_G$ be an irreducible character
belonging
to $B$.  Then $\chi$   is $p$-constant if, and only if,  one of the following occurs:
\begin{itemize}
\item[\rm{(a)}] $d=1$;
\item[\rm{(b)}] $d>1$, $p>2$ and 
$\chi$ is not exceptional.
\end{itemize}
\noindent In addition, if $\chi$ is $p$-constant then  $\chi(g)=1$ or $-1$ for $g\in 
\varSigma_p(G)$. 
\end{itemize}
\end{theo}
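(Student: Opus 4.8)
The plan for part~(1) is to use the description of the class functions that vanish on the $p$-singular set. Suppose $\chi$ is $p$-constant with value $c$ on $\varSigma_p(G)$, and set $f:=\chi-c\cdot 1_G$, so that $f$ vanishes on every $p$-singular element. The class functions vanishing on $\varSigma_p(G)$ form a space of dimension equal to the number of $p$-regular classes, i.e.\ $|\IBr(G)|$, and the projective indecomposable characters $\Phi_\varphi$ ($\varphi\in\IBr(G)$) are $|\IBr(G)|$ linearly independent such functions, hence (by a dimension count) a basis. Writing $f=\sum_\varphi a_\varphi\Phi_\varphi$ over $\mathbb C$ and using that each $\Phi_\varphi$ lies in $\mathbb C\,\Irr(B_\varphi)$, the projection $f_{B_0}=\sum_{\varphi\in\IBr(B_0)}a_\varphi\Phi_\varphi$ of $f$ onto $\mathbb C\,\Irr(B_0)$ still vanishes on $\varSigma_p(G)$. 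If $\chi\notin B_0$ then $f_{B_0}=-c\cdot 1_G$; since $1_G$ is nowhere zero this forces $c=0$, and then $\chi$ has defect $0$, contrary to hypothesis. Hence $\chi\in B_0$. (That $c$ need not be rational is irrelevant, the argument being linear over $\mathbb C$.)

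For part~(2) I would invoke the theory of blocks with cyclic defect group (Brauer, Dade; cf.\ \cite[Ch.~VII]{Fe}). Put $|S|=p^a$, let $e$ be the inertial index of $B_0$, so $ed=p^a-1$ and $e\mid p-1$; write the non-exceptional characters of $B_0$ as $\psi_1,\dots,\psi_e$ (one of them $1_G$) and the exceptional ones as $\psi_1^{*},\dots,\psi_d^{*}$, recalling that all $\psi_j^{*}$ have the same decomposition numbers. The key input for the values is: for a $p$-singular element $g=us$ with $u=g_p\ne 1$ and $s=g_{p'}\in C:=C_G(u)$, one has $\chi(us)=\sum_\mu d^{u}_{\chi\mu}\,\mu(s)$, where $\mu$ runs over $\IBr$ of the principal block $b_0(C)$ (Brauer's third main theorem) and the generalised decomposition numbers $d^{u}_{\chi\mu}$ are tightly constrained in a cyclic block. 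I would first treat the exceptional characters when $d>1$: if $\chi=\psi_1^{*}$ were $p$-constant then, exactly as in part~(1), $\chi-c\cdot 1_G=\sum_{\varphi\in\IBr(B_0)}a_\varphi\Phi_\varphi$, and comparing in the basis $\Irr(B_0)$ the coefficient of $\psi_1^{*}$, namely $1$, with that of $\psi_2^{*}$, namely $0$ (as $\psi_2^{*}\ne\psi_1^{*},1_G$), while $d_{\psi_1^{*}\varphi}=d_{\psi_2^{*}\varphi}$ for all $\varphi$, gives $1=0$. So no $\psi_j^{*}$ is $p$-constant when $d>1$; and the same device shows a non-exceptional character has no partner in $\Irr(B_0)$ with the same decomposition row, so this obstruction is special to exceptional characters. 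If $p=2$ then $e=1$, so $1_G$ is the only non-exceptional character of $B_0$; thus any $\chi\ne 1_G$ in $B_0$ is exceptional, and it is non-constant by the above when $a\ge 2$ (then $d=2^a-1>1$), while if $a=1$ then $d=1$ and we fall under case~(a).

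It remains to prove $p$-constancy, with value $\pm 1$, in cases~(a) and~(b). In case~(a) ($a=1$, $e=p-1$) the group $N_G(S)$ acts transitively on $S\setminus\{1\}$, so all non-trivial $p$-elements are $G$-conjugate; for such a $u$, $b_0(C)$ has inertial index $1$ and hence a single irreducible Brauer character $1_C$, so $\chi(us)=d^{u}_{\chi,1_C}$ depends neither on $s$ nor (by conjugacy) on $u$ --- thus $\chi$ is $p$-constant. Moreover $\chi\ne 1_G$ lies in the positive-defect block $B_0$, so it does not vanish on all of $\varSigma_p(G)$, whence $d^{u}_{\chi,1_C}\ne 0$; and since all powers of $u$ are conjugate, $d^{u}_{\chi,1_C}=\chi(u)\in\mathbb Z$, while the generalised orthogonality relation gives $\sum_{\chi\in\Irr(B_0)}|d^{u}_{\chi,1_C}|^{2}=p$ --- a sum of exactly $p$ nonzero integers, forcing each to be $1$. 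In case~(b) ($d>1$, $p>2$, $\chi$ non-exceptional) I would use Dade's explicit generalised decomposition matrices: the rows of non-exceptional characters are $\{0,\pm 1\}$-matrices, and one checks that $\chi$ takes a value in $\{\pm 1\}$ on each $p$-element section and that these signs agree across all subgroups of the cyclic group $S$, so $\chi$ is $p$-constant with value $\pm 1$. Together with part~(1) this yields the equivalence and the value statement.

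The main obstacle is case~(b): proving that a non-exceptional character is genuinely \emph{constant} on $\varSigma_p(G)$ --- not merely of absolute value $1$ --- once $|S|>p$. This requires controlling, simultaneously, the orientation data of the Brauer tree of $B_0$ and of its Brauer correspondents in the centralisers $C_G(u)$ of all non-trivial $p$-elements; by contrast the rest of the argument is either formal or a short direct computation.
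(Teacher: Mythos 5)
Your proposal has a genuine gap at exactly the point you yourself flag: the sufficiency in case (b), i.e.\ the proof that a non-exceptional $\chi\neq 1_G$ really \emph{is} $p$-constant when $d>1$ and $p>2$. You write ``one checks that $\chi$ takes a value in $\{\pm1\}$ on each $p$-element section and that these signs agree across all subgroups of $S$'' and then concede this is ``the main obstacle''. That step is the heart of the theorem and is not carried out; tracking generalised decomposition matrices and Brauer trees of the correspondent blocks in all the centralisers $C_G(u)$ is precisely the hard road. The paper sidesteps all of it with a short Brauer-tree argument that you should adopt: let $n_1,\dots,n_k$ be the nodes on the unique path of the Brauer tree of $B$ joining the node of $1_G$ to the node of $\chi$, with associated ordinary characters $\psi_1=1_G,\dots,\psi_k=\chi$ (one of the $\psi_i$ may be the sum $\chi_0$ of all exceptional characters). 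Each $\psi_i+\psi_{i+1}$ is the character of a projective indecomposable module, hence vanishes on $\varSigma_p(G)$, so $\psi_i(g)=-\psi_{i+1}(g)$ and therefore $\chi(g)=(-1)^{k+1}$ for every $g\in\varSigma_p(G)$. This proves constancy and the value $\pm1$ simultaneously, covers case (a) as well (there the exceptional node is a single irreducible character), and makes your separate generalised-decomposition-number computation for case (a) unnecessary. A second, smaller gap: for $p=2$, $d>1$ you assert that $1_G$ is the unique non-exceptional character of $B$; but $1_G$ can itself be exceptional in the principal block when $p=2$ (this is the content of Lemma \ref{da1}(4) of the paper), and in that situation the unique non-exceptional character $\chi\neq 1_G$ must still be excluded. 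The paper does this by showing $G$ then has a normal $2$-complement $P$ with $P\leq\Ker(\chi)$, so $\chi$ is linear with $\chi(S)\subset\Q$, and $2$-constancy forces $\chi=1_G$, a contradiction.

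The parts you do prove are correct, and two of them are genuinely different from the paper. Your part (1) is the paper's argument with Feit's lemma (that the projective indecomposable characters span the class functions vanishing on $\varSigma_p(G)$) proved rather than cited. Your exclusion of exceptional characters when $d>1$ --- writing $\chi-c\cdot 1_G$ as a combination of projective characters and comparing the coefficients of two exceptional constituents, which must agree because exceptional characters share a decomposition row --- is slicker and more conceptual than the paper's route through rationality of values on $S$ and the structure of $N_G(S)$, and it works uniformly for all $p$. Your case (a) argument via Brauer's second and third main theorems and column orthogonality in the section of $u$ is also sound, though, as noted, it is subsumed by the tree-walk once that is in place.
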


In fact, we have more precise information on $\chi$ in the case (2) above in terms of the Brauer tree of the principal block. This reduces Problem \ref{prob1} to groups with non-cyclic Sylow
$p$-subgroups.  For alternating groups we have the following result:

\begin{theo}\label{th3} 
Let $G=\Alt_n$, $n>4$, be an alternating group, and let $p$ be a prime such  that $n\geq
2p$. Let
$\tau$ be a $p$-constant 
non-linear irreducible character of non-zero defect. Then one of the following holds:
\begin{itemize}
\item[(1)] $p>2$, $n=2p$ and $\tau$ is an irreducible constituent of an  irreducible
character of   $\Sym_n$ corresponding to the partition   $(p,1^{p})$ or $(p,2,1^{p-2})$;
\item[(2)] $p>2$, $n=2p+1$ and $\tau$ is an irreducible constituent of an  irreducible
character of   $\Sym_n$ corresponding to the partition   $(p+1,1^{p})$ or
$(p+1,2,1^{p-2})$;
\item[(3)] $p=2$ and $(n,\chi(1))\in\{(5,3), (5,5), (6,9),(7,15)\}$.
\end{itemize}
All these characters take value $1$ or $-1$ on $ \varSigma_p(G)$. 
\end{theo}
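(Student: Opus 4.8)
The plan is to reduce everything to the representation theory of $\Sym_n$ (via Clifford theory for the index-$2$ subgroup $\Alt_n$) and then to the combinatorics of hooks and $p$-cores. Let me think about what tools are needed. Theorem \ref{th2}(1) forces any $p$-constant $\tau$ of non-zero defect to lie in the principal block $B_0$ of $\Alt_n$. Recall that the $p$-blocks of $\Sym_n$ are parametrised by $p$-cores, and the principal block consists of the $\chi^\lambda$ whose partition $\lambda$ has empty $p$-core precisely when $p \mid n$; more generally the principal block of $\Sym_n$ has $p$-core equal to the $p$-core of the trivial partition $(n)$, and an analogous statement (via Clifford theory and the work on blocks of $\Alt_n$) holds for $\Alt_n$. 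So the first step is: enumerate the candidate partitions $\lambda \vdash n$ lying in the principal block, i.e. with the prescribed $p$-core and with $p$-weight $w$ where $n = |\text{core}| + wp$.

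Next, the key mechanism: being $p$-constant on $\varSigma_p(\Alt_n)$ is a very strong restriction, because $\varSigma_p(\Alt_n)$ contains many classes of elements with quite different cycle structures, yet the character value on a $p$-element of cycle type $(p^a, 1^{n-ap})$ is computed by the Murnaghan--Nakayama rule as an alternating sum over ways of removing $a$ successive $p$-hooks. In particular $\tau$ must take the same value $c$ on a $p$-cycle $(p,1^{n-p})$, on a product of two $p$-cycles $(p^2,1^{n-2p})$ (when $n \geq 2p$), and on all $p$-singular classes obtained by mixing $p$-cycles with $p'$-cycles. Comparing the value on $(p,1^{n-p})$ with the value on $(p,\mu)$ for various $p'$-partitions $\mu$ of $n-p$ forces, via Murnaghan--Nakayama applied to the $p'$-part, that the character $\hat\tau$ of $\Sym_{n-p}$ obtained by removing one $p$-hook (the ``$p$-quotient'' contribution) is \emph{constant on all of $\Sym_{n-p}$} up to sign bookkeeping — hence is $\pm$ the regular-looking combination forcing $\hat\tau$ to be a multiple of the trivial or sign character of $\Sym_{n-p}$. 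Unwinding this with the Littlewood--Richardson / MN description of $p$-hook removal, one concludes that $\lambda$ restricted by removing a $p$-hook can only land in $\{(n-p)\}$ or $(1^{n-p})$ or closely related partitions, which pins down $\lambda$ to a hook $(n-p+1, 1^{p-1})$-type shape or its near neighbours; combined with $n \geq 2p$ the weight argument forces $n = 2p$ or $n = 2p+1$ and $\lambda \in \{(p,1^p),(p,2,1^{p-2})\}$ (resp.\ the $n=2p+1$ analogues), which are exactly the partitions in (1)–(2). The residual small cases where the inequalities are not strict, or where $\lambda$ is self-conjugate so that $\chi^\lambda$ splits in $\Alt_n$ and the two constituents might behave differently, are handled by hand; for $p = 2$ this is where the exceptional list $(n,\chi(1)) \in \{(5,3),(5,5),(6,9),(7,15)\}$ comes from, obtained by a direct check for small $n$ (these are constituents of $\chi^{(3,1,1)}$, $\chi^{(3,2)}$, $\chi^{(4,1,1)}$, $\chi^{(4,3)}$ or similar) together with an argument that for $n \geq 8$ and $p=2$ no further examples arise.

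Finally, for the partitions surviving in (1)–(2) one must verify the claim in both directions: that the named $\chi^\lambda$ (or its constituents after restriction to $\Alt_n$) really are $p$-constant, and that the common value is $\pm 1$. For this I would compute the values of $\chi^{(p,1^p)}$ and $\chi^{(p,2,1^{p-2})}$ on every $p$-singular class of $\Sym_{2p}$ directly by Murnaghan--Nakayama: since these partitions have only $p$-weight $2$, only at most two $p$-hooks can be removed, the sums are short, and one sees the value is independent of the $p'$-part of the cycle type and equals $\pm 1$. Then Theorem \ref{th2}'s general ``$\chi(g)=\pm 1$'' conclusion (or a direct restriction-to-$\Alt_n$ computation, noting these $\lambda$ are not self-conjugate for $p>2$ so $\chi^\lambda$ stays irreducible on $\Alt_n$) gives the last sentence. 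The main obstacle I anticipate is the combinatorial heart of the middle paragraph: turning ``constant on all $p$-singular classes'' into a clean statement about $p$-hook removal, because one has to simultaneously control the $p$-core (which fixes the block), the $p$-quotient (which governs the MN recursion), and the interaction with $p'$-cycles, and to do so uniformly in $n$ rather than prime-by-prime; a clean way is probably to fix the minimal $p$-singular class $(p,1^{n-p})$ and a maximal one, subtract, and induct on the weight $w$, using $n \geq 2p$ to guarantee $w \geq 2$ so that the inductive comparison has teeth. The $p=2$ analysis and the self-conjugate/splitting bookkeeping are fiddly but not deep.
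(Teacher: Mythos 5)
Your overall strategy (principal block via Theorem \ref{th2}(1), then Murnaghan--Nakayama on classes of the form ($p$-cycle)$\times$($p'$-element), then small-case checks) is reasonable, but it diverges from the paper's route and, as written, the combinatorial core is missing rather than merely deferred. The paper does not attempt the hook-removal analysis from scratch: it uses the fact that a $p$-constant character of non-zero defect cannot vanish at \emph{any} $p$-singular element (if it vanished at one it would vanish at all and have defect $0$), and then imports from Lassueur--Malle--Schulte \cite{end} the classification of irreducible characters of $\Sym_n$ and $\Alt_n$ that are nowhere zero on $\varSigma_p$ (Lemma \ref{end An}); this immediately cuts the candidates down to the partitions $(p+r,r+1,1^{p-r-1})$ with $0\le r<p$, $(p,2,1^{p-2})$, $(p+1,1^p)$ and their conjugates, after which a two-line Murnaghan--Nakayama computation on the classes $(p+r-1)(p)(1)$ and $(2p)(r-1)(1)$ eliminates the family with $r>1$. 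Your middle paragraph is meant to replace all of this, and it is exactly the step you flag as ``the main obstacle'': the passage from ``the signed sum $\sum_\mu \pm\chi_\mu$ over $p$-hook removals of $\lambda$ is constant'' to ``$\lambda$ is pinned down and $n\le 2p+1$'' is asserted, not proved, and it is where essentially all of the work outsourced to \cite{end} actually lives.

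Moreover, the deduction as you state it is not quite right for $\Alt_n$. Constancy of $\tau$ on $\varSigma_p(\Alt_n)$ only controls \emph{even} elements, so what you obtain is that $\sum_\mu \pm\chi_\mu$ restricted to $\Alt_{n-p}$ is a constant class function; since $\chi_\mu$ and $\chi_{\mu^T}$ have the same restriction, this permits cancelling pairs $\{\mu,\mu^T\}$ and combinations such as $-\chi_{(n-p)}-\chi_{(1^{n-p})}$. This extra slack is precisely why the answer for $\Alt_n$ is nonempty while for $\Sym_n$ (Proposition \ref{pr2}) every non-linear $p$-constant character has defect $0$: for $\lambda=(p,2,1^{p-2})$ and $n=2p$ the hook-removal sum is $-\chi_{(p)}-\chi_{(1^p)}$, which equals $0$ on odd and $-2$ on even elements of $\Sym_p$. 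Run literally at the $\Sym_{n-p}$ level, your argument would discard the very partitions listed in (1)--(2). To make the proposal work you must either carry out the $\Alt$-level hook combinatorics in full (including elements with several $p$-cycles and the self-conjugate splitting cases), or take the paper's shortcut through the non-vanishing classification. The small slips in the $p=2$ list (the degree-$9$ character of $\Alt_6$ comes from $(4,2)$, not $(4,1,1)$, and the degree-$15$ one of $\Alt_7$ from $(5,1,1)$) are harmless, since you would check these against character tables anyway.
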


For finite simple groups we obtain the following result.

\begin{theo}\label{main}
Let $G$ be a finite simple group, $p$ be a prime dividing the order of $G$ and $\tau$ be
an irreducible character of $G$.
Assume that $\tau(g)=c$ for  all  $g\in \varSigma_p(G)$. Then, one of the
following holds:
\begin{itemize}
\item[(1)] $c\in \{-1,0,1\}$;
\item[(2)] $G=M_{22}$, $p=3$, $c=-2$ and  $\tau(1)=385$;
\item[(3)] $G$ is a group of Lie type
of characteristic $r\neq p$ with a non-cyclic Sylow $p$-subgroup.
\end{itemize}
\end{theo}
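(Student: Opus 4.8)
The plan is to reduce Theorem \ref{main} to the three theorems already stated (Theorems \ref{th1}, \ref{th2}, \ref{th3}) together with a case analysis over the classification of finite simple groups, carefully separating the behaviour according to whether $p$ is the defining characteristic and whether the Sylow $p$-subgroup is cyclic. First I would dispose of the defect-zero case: if $\tau$ has $p$-defect $0$ then $\tau$ vanishes on $\varSigma_p(G)$, so $c=0$ and we are in case (1). So from now on $\tau$ has non-zero defect, and by Theorem \ref{th2}(1) we may assume $\tau$ lies in the principal $p$-block $B$; in particular $\tau\ne 1_G$ means $c$ is a genuine constraint.

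Next I would split into the cyclic and non-cyclic Sylow cases. If $S$ is cyclic, Theorem \ref{th2}(2) applies and gives $\chi(g)\in\{1,-1\}$ for $g\in\varSigma_p(G)$, hence $c\in\{-1,1\}$ and we land in case (1). (One should note here that cyclic Sylow $p$-subgroups occur for all simple groups once $p$ is large relative to the group, so this case is genuinely needed and not vacuous.) So the remaining work is with $S$ non-cyclic. Here I would run through the CFSG families. For $G$ alternating (and $n\geq 2p$, which is forced once $S$ is non-cyclic for $p>2$; the small cases $n<2p$ have cyclic or trivial Sylow and are handled separately or fall under case (1) via Theorem \ref{th2}), Theorem \ref{th3} lists all $p$-constant non-linear characters of non-zero defect and asserts every one of them takes value $1$ or $-1$ on $\varSigma_p(G)$, so again $c\in\{-1,1\}$. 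For $G$ of Lie type in its defining characteristic $r=p$, Theorem \ref{th1} classifies the $p$-constant characters; in the generic case $\tau$ is Steinberg (defect zero, $c=0$) or trivial, and in the exceptional families $\{SL_2(q),SL_3(q),SU_3(q),{}^2B_2(q^2),{}^2G_2(q^2)\}$ one checks directly from the known character tables that the relevant character of degree $|G|_p\pm 1$ takes value $\pm 1$ on the non-identity unipotent elements. Caveat: Theorem \ref{th1} is stated for \emph{unipotent} elements, i.e. $p$-singular when $p$ is the defining characteristic; one should confirm that for these specific small groups being constant on the full $\varSigma_p(G)$ is no weaker, which is immediate since all $p$-singular elements are unipotent in defining characteristic.

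The remaining cases are $G$ a group of Lie type in cross characteristic $r\neq p$ with $S$ non-cyclic — this is precisely case (3), left open — and the sporadic simple groups (including $^2F_4(2)'$). The sporadic case is where the genuinely new value $c=-2$ in (2) appears, and it is the main computational obstacle: one must go through all $26$ sporadic groups and, for each prime $p$ with non-cyclic Sylow $p$-subgroup, inspect the (known, explicitly available) character tables to list the irreducible characters constant on $\varSigma_p(G)$ and record the constant. The point of the theorem is that this finite check yields only one instance with $c\notin\{-1,0,1\}$, namely the character of degree $385$ of $M_{22}$ at $p=3$ with $c=-2$. I would organize this by first eliminating, for each sporadic $G$, those $(G,p)$ with cyclic or otherwise small Sylow (covered by Theorem \ref{th2}), and then doing the finite table-lookup for the remaining pairs; the bulk of the verification is routine given the ATLAS and GAP character-table library, but it is unavoidably a finite case analysis and is the step most likely to hide a slip. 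Assembling cases (1)–(3) from these pieces completes the proof.
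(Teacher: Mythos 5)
Your proposal is correct and follows essentially the same reduction as the paper: Lemma \ref{21} for integrality and the defect-zero case, Theorem \ref{th2}(2) for cyclic Sylow subgroups, Theorem \ref{th3} for alternating groups, the sporadic character-table check (the paper's Proposition \ref{spor}) producing the $M_{22}$ exception, and the defining-characteristic Lie type case — where the paper invokes Lemma \ref{22} directly (which already yields $a=\pm 1$) rather than routing through Theorem \ref{th1} plus character tables, a negligible difference since Theorem \ref{th1} itself rests on that lemma.
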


\noindent Note that  case (3) requires further analysis. This case is not vacuous: for instance the group $PSL_3(7)$ 
admits an irreducible $3$-constant character which takes value  $2$ at the $3$-singular elements.
For sporadic groups see
Section \ref{Sspor}. 

In \cite{Se} Seitz discussed a question on pairs of irreducible characters of classical groups whose degrees differ by $1$. 
He suggested examples, currently known as irreducible Weil characters, and studied these examples in certain details. 
To our knowledge, no further discussion of this question is available in the
literature
(but the Weil characters themselves  
attracted a lot of attention and have many applications). As a part of our proof of Theorem \ref{th1}
we classify all cases where one of the characters is the Steinberg character $St_G$ of
a finite group of Lie type $G$.

\begin{theo}\label{th4}
Let $G$ be a quasi-simple finite group of Lie type. Then $G$ admits  an irreducible character $\tau$  such that 
$\tau(1)=St_G(1)\pm 1$ if, and only if, one of the following holds:
\begin{itemize}
\item[(1)] $\tau(1)=St_G(1)+1$ and $G\in \{SL_2(q), SU_3(q), {}^2B_2(q^2), {}^2G_2(q^2)\}$.
\item[(2)]  $\tau(1)=St_G(1)-1$ and $G\in \{SL_2(q), SL_3(q), Sp_4(q), G_2(q)\}$. 
\end{itemize} 
\end{theo}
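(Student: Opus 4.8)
My approach begins with the observation that $St_G(1)=|G|_p$, so that an irreducible character $\tau$ with $\tau(1)=St_G(1)\pm 1$ has degree coprime to $|G|_p$; since $\tau(1)$ divides $|G|$, it then divides $|G|_{p'}$. The theorem thus breaks into two parts: a \emph{divisibility analysis} — for which quasi-simple $G$ of Lie type does $|G|_p-1$, respectively $|G|_p+1$, divide $|G|_{p'}$? — and an \emph{existence check} producing, in each surviving case, an actual irreducible character of the predicted degree. The pleasant point is that the divisibility analysis already does almost everything: it requires no structure theory of characters beyond $\chi(1)\mid|G|$, and it cuts out precisely the two lists in the statement.

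For the divisibility analysis I would write $|G|_p=q^N$, where $q$ is the relevant power of $p$ ($N$ the number of positive roots; for the Suzuki and Ree groups $q$ is the appropriate odd power), and use that $|G|_{p'}=\prod_d\Phi_d(q)^{a_d}$ is a product of cyclotomic values, the exponents $a_d$ being read off from the degrees $d_1,\dots,d_\ell$ of the (possibly twisted) Weyl group. Since $q^N-1=\prod_{e\mid N}\Phi_e(q)$ and $q^N+1=\prod_{e\mid 2N,\,e\nmid N}\Phi_e(q)$ are squarefree, divisibility of $q^N\mp1$ into $|G|_{p'}$ forces $a_N\ge 1$, resp. $a_{2N}\ge 1$ (equivalently, by Zsigmondy's theorem, a primitive prime divisor of $q^N\mp1$ must divide $|G|$). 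For untwisted $G$ this means $N\mid d_i$, resp. $2N\mid d_i$, for some $i$; as $d_i\le h=2N/\ell$, this is impossible once the $BN$-pair rank is $\ge 3$, and for rank $\le 2$ it leaves only $A_1,A_2,B_2=C_2,G_2$, among which a short cyclotomic computation shows $q^N-1\mid|G|_{p'}$ always holds while $q^N+1\mid|G|_{p'}$ holds only for $A_1$. For the twisted types ${}^2\!A_n,{}^2\!D_n,{}^3\!D_4,{}^2\!E_6,{}^2\!F_4,{}^2\!B_2,{}^2\!G_2$ the same is done by inspecting their order polynomials: every such group of $BN$-rank $\ge 2$ is eliminated (in particular $SU_4(q),SU_5(q),{}^3\!D_4(q),{}^2\!F_4(q)$), while for the rank-one groups $SU_3(q),{}^2\!B_2(q^2),{}^2\!G_2(q^2)$ one gets $q^N+1\mid|G|_{p'}$ but never $q^N-1\mid|G|_{p'}$. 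Putting the computations together yields exactly: $|G|_p-1$ divides $|G|_{p'}$ iff $G\in\{SL_2(q),SL_3(q),Sp_4(q),G_2(q)\}$, and $|G|_p+1$ divides $|G|_{p'}$ iff $G\in\{SL_2(q),SU_3(q),{}^2\!B_2(q^2),{}^2\!G_2(q^2)\}$. This already excludes every group off the two lists and shows each listed group can meet only the stated sign (both for $SL_2$).

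For the existence half, in each surviving pair $(G,\pm)$ the quotient $|G|_{p'}/(|G|_p\pm1)$ turns out to be the order of a genuine maximal torus $T$ of $G$ — the nonsplit, resp. split, torus for $SL_2$; a torus of order $q^2-1$ for $SL_3,Sp_4,G_2$; the split (twisted) torus for the rank-one groups $SU_3,{}^2\!B_2,{}^2\!G_2$ — all of which occur, being indexed by the appropriate twisted conjugacy classes of the Weyl group. The Deligne–Lusztig character $\pm R_T^{G}(\theta)$ attached to a character $\theta$ of $T$ in general position is then an irreducible character of degree $|G:T|_{p'}=|G|_p\pm1$; equivalently one simply reads the degree off the generic character tables of $SL_2(q),SL_3(q),SU_3(q),Sp_4(q),G_2(q)$ and of the Suzuki and Ree groups. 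Finally a routine remark reduces the whole problem to simply connected $G$ (the irreducible characters of a central quotient of $G$ form a subset of those of $G$, and $St_G(1)$ depends only on the isogeny class), and the finitely many small fields where $G$ need not be quasi-simple, or carries extra $p'$-degree characters coming from denominators in unipotent degrees (e.g.\ $G_2(3)$), are treated by direct inspection and produce nothing new.

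I expect the main difficulty to be organizational rather than conceptual: keeping the order polynomials — and hence the exponents $a_d$ — correct uniformly across all twisted types, in particular for the $\sqrt{q}$-normalised groups ${}^2\!B_2,{}^2\!G_2,{}^2\!F_4$, and confirming from the Suzuki–Ree and Chang–Ree character tables that the predicted degree $|G|_p+1$ is genuinely attained in $SU_3(q),{}^2\!B_2(q^2),{}^2\!G_2(q^2)$ while the character of the opposite degree $|G|_p-1$ does not exist there. Everything else is bookkeeping with cyclotomic polynomials.
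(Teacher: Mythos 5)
Your proposal follows essentially the same route as the paper: reduce to the arithmetic question of when $|G|_p\pm 1$ divides $|G|$, settle it with Zsigmondy primes, and then confirm existence of the characters from the (generic) character tables of the short surviving list; the paper's Lemmas \ref{-1a} and \ref{+1a} are exactly your divisibility dichotomy, and your Deligne--Lusztig existence argument is a mild conceptual upgrade of the paper's table inspection. One concrete inaccuracy to flag: your claimed equivalence ``$|G|_p-1$ divides $|G|_{p'}$ iff $G\in\{SL_2(q),SL_3(q),Sp_4(q),G_2(q)\}$'' omits $SL_4(2)$, which sits precisely at the Zsigmondy exception $(q,N)=(2,6)$ --- there $|G|_2-1=63$ \emph{does} divide $|G|=20160$ even though $a_6=0$, because the integer values $\Phi_e(2)$ are not pairwise coprime ($\Phi_6(2)=\Phi_2(2)=3$), so your inference ``divisibility forces $a_N\ge 1$'' fails at that point. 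This group survives the divisibility test and is eliminated only by checking that $SL_4(2)\cong\Alt_8$ has no irreducible character of degree $63$ or $65$; your catch-all remark about small fields would need to be spelled out to cover this case.
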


In Section \ref{sec2} we give some basic properties of $p$-constant characters and
we recall some results of E. Dade in order to prove Theorem \ref{th2}. In Section
\ref{sec3} we deal with symmetric and alternating groups. In Section \ref{sec4} we
consider $p$-constant characters for finite groups of Lie type in characteristic $p$ and prove Theorems 
\ref{th1} and \ref{th4}. In
Section \ref{Sspor} we analyse the sporadic groups and finally in Section \ref{sec6} we
prove Theorem \ref{main}.

\section{Blocks with cyclic defect group}\label{sec2}

We first make the following observation for an arbitrary finite group $G$. 
Let $p$ be a prime dividing the order of $G$ and let $S$ be a Sylow $p$-subgroup of $G$. 
Let $\ZZ$ denote the set of rational integers.

\begin{lemma}\label{21}
Let $\tau$ be a generalized character of a group $G$ such that
$\tau(u)=a$ for some complex number $a$ and every $1\neq u\in S$. 
Then $a\in\ZZ$.
\end{lemma}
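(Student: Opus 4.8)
The plan is to exploit the interplay between the restriction of $\tau$ to a Sylow $p$-subgroup $S$ and column orthogonality in $S$, combined with the fact that generalized characters take algebraic integer values that are stable under the relevant Galois action. First I would note that $\tau(1)\in\ZZ$ automatically, since $\tau$ is a generalized character. Now consider the restriction $\tau|_S$: by hypothesis it agrees with the class function on $S$ that is $a$ on $S\setminus\{1\}$ and $\tau(1)$ at the identity. This class function can be written as $a\cdot\rho_S + (\tau(1)-a)\cdot 1_S$, where $\rho_S$ denotes the regular character of $S$ (indeed $\rho_S$ is $|S|$ at $1$ and $0$ elsewhere, so the combination gives $a|S|+\tau(1)-a$ at $1$, which is wrong — instead use $\rho_S/|S|$ is not a character). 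More cleanly: the class function on $S$ that equals $0$ on $S\setminus\{1\}$ and $1$ at $1$ is $\rho_S/|S|$, so $\tau|_S = a\cdot 1_S + \frac{\tau(1)-a}{|S|}\,\rho_S$ as class functions. Taking the inner product of $\tau|_S$ with the trivial character $1_S$ gives an integer (it is $\langle \tau|_S, 1_S\rangle_S$, a $\ZZ$-combination of multiplicities since $\tau|_S$ is a generalized character of $S$), and that inner product equals $a + \frac{\tau(1)-a}{|S|}$. Hence $a + \frac{\tau(1)-a}{|S|} \in \ZZ$, and since $\tau(1)\in\ZZ$ this forces $\frac{\tau(1)-a}{|S|}\in\Q$, whence $a\in\Q$.

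It remains to upgrade $a\in\Q$ to $a\in\ZZ$. For this I would invoke that $a = \tau(u)$ for a fixed element $u$ of order $p$ (take any $1\neq u\in S$); as a value of a generalized character, $a$ is an algebraic integer. A rational algebraic integer is a rational integer, so $a\in\ZZ$. This is the cleanest route and sidesteps any Galois-theoretic bookkeeping: the only two facts needed are (i) generalized characters take algebraic-integer values, and (ii) the inner-product argument above forcing $a\in\Q$.

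The only point requiring a little care — the ``main obstacle,'' though it is mild — is making sure the inner product $\langle\tau|_S,1_S\rangle_S$ is genuinely a rational integer: this holds because $\tau|_S$ is a $\ZZ$-linear combination of irreducible characters of $S$ (restriction of a generalized character is a generalized character), so its multiplicities, in particular the multiplicity of $1_S$, lie in $\ZZ$. Once that is in hand the chain $a\in\Q$ and $a$ an algebraic integer $\Rightarrow a\in\ZZ$ closes the argument. One could alternatively argue directly via the identity $\sum_{u\in S}\tau(u) = |S|\cdot\langle\tau|_S,1_S\rangle_S$, i.e. $\tau(1) + (|S|-1)a = |S|\cdot(\text{integer})$, which immediately gives $(|S|-1)a\in\ZZ$ hence $a\in\Q$ (using $\tau(1)\in\ZZ$), and then conclude as before; I would likely present this second, more elementary phrasing in the final write-up.
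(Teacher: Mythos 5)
Your proof is correct and is essentially the paper's argument: both restrict $\tau$ to $S$ and use the fact that the multiplicity of a linear character of $S$ in the generalized character $\tau|_S$ is a rational integer. The only difference is that the paper pairs against a \emph{non-trivial} linear character $\lambda$ of $S$, which yields $a=\tau(1)-|S|\cdot(\tau_{|S},\lambda)\in\ZZ$ in a single step, whereas pairing against $1_S$ as you do only gives $(|S|-1)a\in\ZZ$ and therefore requires the additional (standard) observation that $a=\tau(u)$ is an algebraic integer.
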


\begin{proof}
Let $\lam\neq 1_S$  be an arbitrary linear character of $S$. Then
\begin{eqnarray*}
(\tau_{|S},\lam) & =& \frac{\sum_{u\in S} \tau(u)\lam(u)}{|S|}=\frac{\tau(1)+\sum_{1\neq u
\in S
} a\lam(u)}{|S|}\\
& =& \frac{\tau(1)+a\cdot |S|\cdot
(\lam,1_S)-a}{|S|}=\frac{\tau(1)-a}{|S|},
\end{eqnarray*}
since  $(\lam,1_S)=0.$
Hence, $a=\tau(1)-|S|\cdot (\tau_{|S},\lam) \in \ZZ$.
\end{proof}

The following lemma reduces Problem \ref{prob1} to groups with trivial center,
in particular, we can ignore quasi-simple groups that are not simple.

\begin{lemma}\label{cov}
Let $G$ be a finite group, $p$ a prime and let $\chi$ be 
an  irreducible character of $G$ of non-zero
$p$-defect.
Suppose that $\chi$ is non-trivial and  $p$-constant. Then  
one of the following holds:
\begin{itemize}
\item[(1)] $p$ does not divide $|Z(G)|$,  $Z(G)\leq\Ker(\chi)$  and the corresponding character $\bar \chi$
of $G/Z(G)$ is an irreducible $p$-constant character;
\item[(2)]
 $p=2$,  $|Z(G)|=2$, the order of $\Ker(\chi)$ is odd and $G=\Ker(\chi)\times Z(G).$
\end{itemize}
\end{lemma}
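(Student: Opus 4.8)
The plan is to analyze the central character $\omega$ of $\chi$ in tandem with the constant $c$, using Lemma \ref{21} to guarantee $c\in\ZZ$. Write $\rho$ for a representation affording $\chi$. Note first that $c\neq 0$: an irreducible character vanishing on $\varSigma_p(G)$ has $p$-defect $0$, contrary to hypothesis. So throughout $c\neq 0$.

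First I would dispose of the case $p\mid |Z(G)|$. Choose a non-trivial $p$-element $z$ in the Sylow $p$-subgroup of $Z(G)$; then $z\in\varSigma_p(G)$, so $c=\chi(z)=\chi(1)\omega(z)$, and taking absolute values gives $|c|=\chi(1)$. Hence $c=\varepsilon\chi(1)$ with $\varepsilon\in\{\pm 1\}$. Since $|\chi(u)|=|c|=\chi(1)$ for every $u\in\varSigma_p(G)$, each such $u$ has $\rho(u)$ a scalar matrix, and then $\chi(u)=\chi(1)\cdot(\text{scalar})=\varepsilon\chi(1)$ forces $\rho(u)=\varepsilon I$; in particular $\omega(z)=\varepsilon$. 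If $\varepsilon=1$, then $\rho(z)=I$, and for every $p'$-element $g$ the element $zg$ has order $|z|\cdot|g|$ divisible by $p$, so $zg\in\varSigma_p(G)$ and $\chi(g)=\chi(zg)=c=\chi(1)$; combined with $\chi\equiv c=\chi(1)$ on $\varSigma_p(G)$ this makes $\chi$ constant, so $\chi=1_G$, a contradiction. Therefore $\varepsilon=-1$, whence $\omega(z)=-1$ has order $2$; as $\omega(z)$ has $p$-power order, $p=2$. Now $\rho(g)=I$ for every odd-order $g$ (from $zg\in\varSigma_2(G)$ one gets $-\rho(g)=\rho(zg)=-I$) and $\rho(g)=-I$ for every even-order $g$, so $\rho(G)=\{\pm I\}$, $\chi$ is linear, $\Ker(\chi)$ is exactly the set of odd-order elements (an even-order element of $\Ker(\chi)$ would give $I=-I$), and $z\notin\Ker(\chi)$. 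Hence $|\Ker(\chi)|$ is odd, $\langle z\rangle$ is a central subgroup of order $2$ meeting $\Ker(\chi)$ trivially, and $G=\Ker(\chi)\langle z\rangle=\Ker(\chi)\times\langle z\rangle$, which is case (2).

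It remains to treat $p\nmid|Z(G)|$, where I claim we land in case (1). For $z\in Z(G)$ and $u\in\varSigma_p(G)$, writing $u=u_pu_{p'}$ for the commuting $p$- and $p'$-parts, the element $zu=(zu_{p'})u_p$ has $p$-part $u_p\neq 1$, so $zu\in\varSigma_p(G)$; therefore $\omega(z)c=\chi(zu)=c$, and since $c\neq 0$ we get $\omega(z)=1$. Thus $Z(G)\leq\Ker(\chi)$ and $\chi$ is inflated from an irreducible character $\bar\chi$ of $\bar G=G/Z(G)$. If $\bar u\in\bar G$ is $p$-singular, then any lift $u\in G$ has order divisible by that of $\bar u$, hence by $p$, so $u\in\varSigma_p(G)$ and $\bar\chi(\bar u)=\chi(u)=c$; since moreover $p\nmid|Z(G)|$ implies $|\bar G|_p=|G|_p$ and $\bar\chi(1)=\chi(1)$, the character $\bar\chi$ is a non-trivial irreducible $p$-constant character of non-zero defect, giving case (1).

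The only genuinely delicate point is the bookkeeping in the branch $p=2$: one must observe that $2$-singular elements act under $\rho$ by a single well-defined scalar, and exploit repeatedly that multiplying a $p'$-element by a central $p$-element returns to $\varSigma_p(G)$; once this is in place the direct-product structure, and with it the oddness of $|\Ker(\chi)|$, is immediate. Everything else is a routine manipulation of the central character.
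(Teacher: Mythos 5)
Your argument is correct and follows essentially the same route as the paper's: both proofs exploit that multiplying a central element by a suitable $p$- or $p'$-element stays inside $\varSigma_p(G)$, together with the integrality of the constant from Lemma \ref{21}, to force $\omega(z)=1$ unless $p=2$ and $\omega(z)=-1$, and then read off the splitting $G=\Ker(\chi)\times\langle z\rangle$. The only remarks worth making are that, exactly like the paper's own proof, your second branch literally establishes $G=\Ker(\chi)\times\langle z\rangle$ for a particular central involution $z$ (rather than the stated $|Z(G)|=2$), and that your treatment of case (1) -- explicitly verifying $p\nmid|Z(G)|$, $Z(G)\leq\Ker(\chi)$ and the $p$-constancy of $\bar\chi$ -- is more complete than the paper's, which leaves that branch implicit.
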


\begin{proof} 
Suppose that $Z(G)$ is not contained in $\Ker(\chi)$, and let $z\in Z(G)$, $\chi(z)\neq \chi(1)$.
If $z$ is not a $p$-element then $zg\in
\varSigma_p(G)$ for every $p$-element $g\in G$. So $ \chi(g)=\chi(zg)=\frac{\chi(z)}{\chi(1)}\chi(g)$. As
$\chi$ is not of $p$-defect 0, we have $\chi(g)\neq 0$, and hence $\chi(z)=\chi(1)$, which is a contradiction.

It follows that $Z(G)$ is a $p$-group, so $z\in
\varSigma_p(G)$. By Lemma
\ref{21}, $\chi(z)\in \ZZ$ and so $\chi(z)=- \chi(1)$, whence $p=2$ and $\Ker(\chi)$ has odd order. 
Let $h\in G$. Suppose that 
$zh\in
\varSigma_2(G)$. Then $-\chi(1)=\chi(z)=\chi(zh)=\frac{\chi(z)}{\chi(1)}\chi(h)=-\chi(h).$ So $h\in  \Ker(\chi)$. It follows that $G/\Ker(\chi)$ is a 2-group, and hence, for $g\in G$,   either $zg\in \varSigma_2(G)$ or $g\in z\cdot \Ker(\chi)$. In the former case $g\in \Ker(\chi)$, so 
$G=\Ker(\chi)\cup z\Ker(\chi)$, and hence $G=\Ker(\chi) \times \langle z \rangle$.
\end{proof}

Prior  proving  Theorem \ref{th2} we recall certain facts from representation
theory of
groups with cyclic Sylow $p$-subgroups. For further details, see \cite{Da}.
Let $G$ be a finite group with cyclic Sylow $p$-subgroup $S$. Set $C=C_G(S)$, 
$N=N_G(S)$ and $n=|N:C|$. As $(n,p)=1$, it follows that $n$ divides $p-1$.
  
Let $B$ be a block of $G$ having defect group $S$. By Brauer's first main theorem, there exists
a unique block $B_0$ of $N$ with the same defect group $S$ such that $B_0^G=B$. Let $b_0$ be a
block of $C$ such that $b_0^N=B_ 0$ (also  $S$ is the defect group of $b_0$). Let $E$ be the
subgroup of $N$ fixing $b_0$ and $e=|E:C|$. Then   $E/C$ acts on $S$ as a group of
automorphisms and  $e$ divides $p-1$ 
 ($e$ is called the inertia index of $B$).

The set of non-trivial irreducible characters  of $S$ 
partitions  into $(|S|-1)/e$ orbits under the action of $E/C$. Each of these orbits
contains  $e$ elements. Let $\Lambda$ be a complete set of representatives of these
classes. So $d=|\Lambda|=(|S|-1)/e$.

For a non-trivial character
$\lam\in\Irr S$ let $\eta_\lam$ denote the sum of all
$N/C$-conjugates of $\lam$.
In particular, $\eta_\lam(1)=n$
and $(\eta_\lam,1_S)=0$.
Note that if  $\lam,\mu$ are $N$-conjugate, then $\eta_\lam=\eta_\mu$.

\begin{lemma}\label{da1}\cite[Theorem 1 and Corollary 1.9]{Da} 
Under the previous hypothesis on $G,S,B,\ldots$, the block $B$ contains 
$e$ non-exceptional characters $\chi_1,\ldots , \chi_{e} $ and 
$d=|\Lambda|$ exceptional
characters $\chi_{\lam}$ $(\lam\in \Lambda)$.
Let $g\in S$ be of order $|S|$ and let $\phi$ be the unique irreducible Brauer character
of $C$ contained in $b_0$. 
When $|S|>p$, let $x\in S$ be of order $p$ and
$S_1=\langle x \rangle$. Set $N_1=N_G(S_1)$ and $C_1=C_G(S_1)$.
\begin{itemize}
\item[(1)] For any $j=1,\ldots , e$ one has 
$$\chi_j(g)=\ep_j\phi(1)\cdot  |N:E|\quad 
\textrm{ and } \quad \chi_j(x)=\ep_j\gamma \phi_1(1)\cdot |N_1: EC_1|,$$
for some $\phi_1\in \IBr C_1$,
$\ep_j=\pm 1$ and $\gamma=\pm 1$ that do not depend on $g$ and $x$.
\item[(2)] For any $\lambda \in \Lambda$ one has 
$$\chi_{\lam}(g)=\ep_0 \phi(1) \eta_\lam(g) \quad \textrm{ and
 }\quad \chi_\lambda(x)=\ep_0\gamma \phi_1(1)\sum_{ y \in
N_1/C_1}\lambda^y(x),$$
where $\phi_1,\gamma $ are the same as in item {\rm (1)} and $\ep_0=\pm 1$ does not depend
on $g,x$ and $\lambda$.
\item[(3)] $\chi_\lambda(S)\subset \Q$
if, and only if, $n=p-1$ and $|\lambda (S)|=p$.
\item[(4)] Assume $B$ is the principal block of $G$ and $d>1$.
Then the trivial character
$1_G$ is not exceptional,
except possibly when $p=2$ and 
$G$ has a normal subgroup of index $|S|$.
\end{itemize}
\end{lemma}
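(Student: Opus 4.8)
The plan for items (1)--(3) is to unwind Dade's description \cite{Da} of a block $B$ with cyclic defect group $S$. His main theorem gives, for every non-trivial $p$-subgroup $P\le S$ and every $\chi\in\Irr B$, the value of $\chi$ on the generators of $P$ in terms of the Brauer tree of $B$, the inertia index $e$, and an irreducible Brauer character of $C_G(P)$ obtained by Green correspondence. I would apply this with $P=S$ and with $P=S_1=\langle x\rangle$; since these data depend on a $p$-singular element only through the cyclic subgroup it generates, one may fix the signs $\ep_j,\ep_0,\gamma$ and the Brauer characters $\phi\in\IBr C$ and $\phi_1\in\IBr C_1$ once and for all, independently of $g$ and of $x$, and then read off the displayed formulas; that the $\phi_1$ and the $\gamma$ occurring in (1) and in (2) are literally the same is built into Dade's construction. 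For (3) one notes that $\chi_\lam(g)=\ep_0\phi(1)\eta_\lam(g)$ lies in $\Q$ exactly when the genuine character $\eta_\lam$ of $S$ is constant on the set of generators of $S$; since $N/C$ and $\mathrm{Gal}(\Q(\zeta_{|S|})/\Q)$ act on $\Irr S$ by the \emph{same} rule $\mu\mapsto\mu^t$ and $n=|N/C|$ divides $p-1$, an elementary argument shows that this happens if and only if $\lam$ has order $p$ and $n=p-1$. Reconciling the notation with \cite{Da} is the only laborious step; part (4) is then short and I carry it out now.

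Assume $B$ is the principal block and $d>1$. First, two observations. Since $S\le Z(C)$, a theorem of Burnside gives $C$ a normal $p$-complement, so its principal block contains a unique irreducible Brauer character, necessarily linear; as $b_0$ is that block (Brauer's third main theorem), $\phi=1_C$ and $\phi(1)=1$. Second, $b_0$, being the principal block of $C$, is stable under conjugation by $N$, so $E=N$, $e=n$ and $d=(|S|-1)/n$. Now suppose, for a contradiction, that $1_G$ is exceptional, say $1_G=\chi_{\lam_0}$ with $\lam_0\in\Lambda$. Evaluating the first formula of (2) at a generator $g$ of $S$ gives $1=1_G(g)=\ep_0\phi(1)\eta_{\lam_0}(g)=\ep_0\eta_{\lam_0}(g)$, so $\eta_{\lam_0}$ takes the constant value $\pm1$ on the generators of $S$.

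By the characterisation used in (3), this forces $\lam_0$ to have order $p$ and $n=p-1$, and then $\eta_{\lam_0}$ is the sum of all $p-1$ linear characters of $S$ of order $p$, so $\eta_{\lam_0}(g)=-1$. If $|S|=p$, then $e=n=p-1$ gives $d=1$, contrary to hypothesis. Hence $|S|>p$; choose $x\in S$ of order $p$ and set $S_1=\langle x\rangle$, $N_1=N_G(S_1)$, $C_1=C_G(S_1)$. As $\lam_0$ has order $p$ and $|S|>p$, we have $S_1\le\Ker\lam_0$, hence $\lam_0^y(x)=1$ for all $y$ and $\sum_{y\in N_1/C_1}\lam_0^y(x)=|N_1:C_1|$; the second formula of (2) then reads $1=1_G(x)=\ep_0\gamma\phi_1(1)|N_1:C_1|$, forcing in particular $|N_1:C_1|=1$, i.e.\ $N_G(S_1)=C_G(S_1)$. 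But $N_G(S)\le N_G(S_1)$, and $N_G(S)$ acts on $S_1=\Omega_1(S)$ through a subgroup of $\mathrm{Aut}(S_1)$ of order $n=p-1$, because the kernel of $\mathrm{Aut}(S)\to\mathrm{Aut}(S_1)$ is a $p$-group and therefore meets the image of $N_G(S)$ trivially. For $p$ odd this action is non-trivial, contradicting $N_G(S_1)=C_G(S_1)$; so for $p$ odd $1_G$ is never exceptional. For $p=2$ the condition $n=p-1$ reads $n=1$, i.e.\ $N_G(S)=C_G(S)$, and Burnside's theorem yields a normal $2$-complement $K$ with $G/K\cong S$ --- precisely the exceptional case in the statement (and one checks directly that there $1_G$, inflated from the non-exceptional character $1_S$ of the $p$-group $G/K$, is still non-exceptional). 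The hard part of the whole proof, as I expect, is the bookkeeping with Dade's conventions in (1)--(3); the argument for (4) is elementary once those are in place.
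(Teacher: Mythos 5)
Your argument for item (4) is, up to reorganization, the same as the paper's: both deduce $\phi=1_C$, $E=N$, $e=n$ from $B$ being principal, invoke item (3) to get $n=p-1$ and $\eta_{\lam}(g)=-1$, then evaluate the exceptional-character formula at an element $x$ of order $p$ to force $N_G(S_1)=C_G(S_1)$ (equivalently $|N_1:C_1|=p-1=1$), hence $p=2$, and finish with Burnside's normal complement theorem. Items (1)--(2) are cited from Dade in both treatments; for item (3) you sketch the same underlying idea (the $N/C$-orbit of $\lam$ must exhaust the Galois orbit) via Gauss periods, where the paper argues through the characteristic polynomial of a representation affording $\eta_\lam$ and the irreducibility of the cyclotomic polynomial --- your version is fine but should note that a rational orbit sum forces $a=1$, $n=p-1$ because the period $\sum_{t\in T}\lam(g)^t$ generates the fixed field of $T$ in $\Q(\zeta_{p^a})$, not merely because the set of summands would have to be Galois-stable.
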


\begin{proof}  
(3) and (4) are  not stated in \cite{Da}, so we provide a proof here, although
they can be known to some experts.

(3) Let $S=\langle g\rangle$. By item (2), 
$\chi_\lam(g)=\ep_0 \phi(1)\eta_\lambda(g)$, where $\eta_\lambda(1)=n$. Let $\rho$ be a 
representation of $S$ with character $\eta_\lam$.  
Let  $|\lambda (S)|=p^a$
for some integer $a>0$ (so  
$\lambda(g)$
is a primitive $p^a$-root of unity). 
Let $\alpha_1\ld \alpha_n$ be the eigenvalues of $\rho(g)$. As $\eta_\lam$ is the sum of
all
$N$-conjugates of $\lam$ and $1_S\neq \lam\in \Irr S$, it follows that  $\alpha_1\ld
\alpha_n$ are  (distinct) roots
of the polynomial $t(x):=(x^{p^a}-1)/(x^{p^{a-1}}-1)$, which is irreducible over $\Q$.
Let $f(x)$ be the characteristic polynomial of the matrix $\rho(g)$, so $\alpha_1\ld
\alpha_n$
are also the roots of $f(x)$.
Suppose that that $\chi_\lambda(h)$ is rational for every $h\in S$.
Then so is $\eta_\lambda (h)$ and $\eta_\lambda(h)=\alpha^k_1+\cdots 
+\alpha^k_n\in\Q$ for every
integer $k$.
It is well known that the coefficients of the  characteristic polynomial of a square 
matrix $M$, say, are polynomials 
of the traces of $M^i$ for various integers $i$. It follows that the coefficients of $f(x)$ are rational.
This implies that the  polynomial $t(x)$ is reducible over $\Q$,
unless $t(x)=f(x)$. In the latter case all primitive $p^a$-roots are the roots of $f(x)$.
Therefore, $n=p-1$ and $a=1$. 

The converse is obvious.

\smallskip
(4)  Suppose the contrary, that $1_G$ is an exceptional character (this belongs to the
principal block).
We first recall that the principal blocks of $G$ and $N$ correspond to each other
under
the Brauer correspondence \cite[61.16]{CR2}. Furthermore,
it follows from 
by \cite[61.7 and 61.11]{CR2}, that  $1_N$ is the only irreducible Brauer character in
the principal block of $N$. 
In particular, $\phi$ is the trivial Brauer character of $C$. The group $E$ 
above is in fact the stabilizer in $N$ of this character, and hence   $E=N$, $e=n$ in this case.

By item (3),  $n =p-1$.  Let $1_G=\chi_{
\lambda}$ for some $\lambda\in \Lambda$ such that $| \lambda(S)|=p$.
Therefore, $\eta_\lambda(g)=-1$. 
Since $d>1$ we have $|S|>p$. 
Clearly,  
$|N_1/C_1|\leq p-1$. As $N\subseteq N_1$ and $|N/C|=p-1$, we have $|N_1/C_1|= p-1$.
By item (2) we get $1=\chi_{\lambda}(x)= \pm \phi_1(1)\cdot |N_1:C_1|$ for some
$\phi_1\in \IBr C_1$, whence $n=1$ and $p=2$.

The statement on the structure of $G$  follows from the  Burnside Normal Complement Theorem
\cite[Theorem 14.3.1]{Ha}.
\end{proof}

\begin{proof}[Proof of Theorem {\rm \ref{th2}}]
Clearly we may assume $\chi\neq 1_G$.

(1) It follows  from Lemma \ref{21} that $\chi(S)\subset \mathbb{Z}$,
so $\chi-a\cdot 1_G$ (where $a=\chi(s)$, $1\neq s\in S$) is a non-zero 
generalized 
character vanishing at the $p$-singular elements. It
follows from
\cite[Ch.IV, Lemma 3.14]{Fe} that  $\chi$ and $1_G$ belong to the same block. As $1_G$ is
in the principal block, so is $\chi$.

(2) By (1),
$1_G$ and $\chi$  belong to the principal block, and,  by assumption,  $\chi\neq 1_G$. 
Consider the Brauer tree associated to the principal block. Recall that one node of the
Brauer tree corresponds to the  sum of all $d=|\Lambda|$ exceptional
characters (denoted by $\chi_0$), and the
other nodes are in bijective correspondence with the $e$ non-exceptional characters of
the
block.  

(i) The theorem is true if   both $\chi$ and $1_G$  are  not exceptional  or $d=1$.

Let $v,w$ be the nodes at the Brauer tree corresponding to the characters $1_G$ and
$\chi$,  let
$n_1=v, n_2,\ldots, n_k=w$ be the consequent nodes of the path connecting
$v$ and $w$, and let $\psi_i$ be the ordinary character corresponding to $n_i$ for
$i=1,\ldots, k$ (one of the characters $\psi_i$ coincides with $\chi_0$, which is 
irreducible if and only if
$d=1 $). 
By  \cite[Ch.VII, Lemma
2.15]{Fe}, $\psi_i+\psi_{i+1}$ is the character of a projective indecomposable module
for $i=1,\ldots ,k-1$. Let $g\in \varSigma_p(G)$. Then 
$\psi_i(g)=-\psi_{i+1}(g)$ for every $i=1,\ldots ,k-1$. It follows that
$\psi_i(g)=(-1)^{i+1}\psi_1(g)$. As $\psi_1=1_G$, we arrive at the case (2)(a).
In addition, this proves the additional statement of the theorem in this case. 

(ii) The theorem is true if $d>1$ and $\chi$ or $1_G$ is exceptional.

Let  $d>1$. If $1_G$ is exceptional then, by Lemma \ref{da1}(4), $n=e=1$, $p=2$ and
$G/P\cong S$ for a normal subgroup $P$
of $G$.  We show that the same is true if $1_G$ is non-exceptional.

Let $\chi=\chi_\lam$ for some $\lam\in\Lambda$.
By Lemma \ref{da1}(3), $\chi_\lambda(S)\not \subset \Q$ unless
$n=p-1$ and $|\lambda(S)|=p$. In this case $\eta_\lam(g)=-1$ and $\chi(g)=\pm 1$.
Let $x\in S$ be of order $p$ and
$S_1=\langle x \rangle$. Set $N_1=N_G(S_1)$ and $C_1=C_G(S_1)$.  As $\lam(x)=1$, by  Lemma \ref{da1}(2),
we have $\chi_{\lambda}(x)=
\pm \phi_1(1)\cdot |N_1:C_1|$ for some $\phi_1\in \IBr C_1$. As $\chi$ is
$p$-constant,   $\chi(g)= \chi(x)$, whence $N_1=C_1$.  It is well known 
$N\cap C_1=C$. As $N\subseteq N_1$, we have $N=C$, and hence $n=1$. Then $n=p-1$ implies
$p=2$.  

By the Burnside Normal Complement Theorem
\cite[Theorem 14.3.1]{Ha}, $G$ has a normal $2$-complement,
that is, $G$ has  a normal subgroup $P$, say, of index $|S|$ as claimed.

Furthermore,  $\chi$ belongs to the principal block $B$, so the irreducible constituents of 
$\chi|_P$ are in the principal block $b$ of $P$, see \cite[Theorem 9.2]{Na}. 
As $P$ is a $p'$-group,
$1_P$ is the only irreducible character in $b$. Therefore, $\chi|_P=\chi(1)\cdot 1_P$,
and hence $P$ is in the kernel of  $\chi$. 
So $\chi$ is linear.  
Since $d>1$, necessarily $|S|>2$.
In this case, the hypothesis $\chi$ be $p$-constant leads to the contradiction
$\chi=1_G$.  

To prove the converse, suppose that $\chi$ is non-exceptional. 
By Lemma \ref{da1}(1), $\chi(S)\subset \Q$.  By (i), we only have to deal 
with the case where $1_G$ is exceptional. Then by Lemma \ref{da1}(4), $n=e=1$ and
$G/P\cong S$ for a normal subgroup $P$
of $G$. Then we have seen in the previous paragraph that
$\chi(P)=1$. 
Now $\chi(S)\subset \Q$ implies $\chi^2=1_G$. Then $\chi(g)= \chi(g^2)$ leads to $\chi = 1_G$
which is a contradiction.
\end{proof}

 It is well known that a defect group of the
principal block of $G$ coincides with a Sylow $p$-subgroup.
Therefore, if a $p$-constant character 
belongs to a block with cyclic defect group then, by Theorem \ref{th2}(1), the
Sylow $p$-subgroups of $G$ are cyclic.

\section{Symmetric and alternating groups}\label{sec3}

We first consider the case where Sylow $p$-subgroups of $G=\Sym_n$ are cyclic,
equivalently with $p\leq n<2p$.
By Theorem \ref{th2}, a character $\tau\in\Irr G$ of non-zero defect is $p$-constant if,
and only if, $\tau$ belongs to the principal block. 
Therefore, it suffices to determine the
non-linear irreducible characters that are in the same block as $1_G$.
However, this is already  known, see \cite[6.1.21]{JK}. 
Specifically, if $\chi_\lambda$ is the irreducible character of $G$ corresponding to a
partition $\lambda$ of $n$, then $\chi_\lambda$ is in the principal block if and only if
the $p$-core of $\lambda$ is the same as that of the trivial partition $(n)$. (See
\cite[p.76]{JK} for the notion of $p$-core.)
If $n=p$ then the $p$-core of $(n)$ is empty; this implies that $\lambda$
is a hook. If $p<n<2p$ then the $p$-core of
$(n)$ is $(n-p)$. It follows that 
$\lambda$ is the  partition associated to the diagram obtained from a hook
diagram  associated to a partition $\lambda'\neq (p)$ for $\Sym_p$
either by adding   $(n-p)$ boxes to the second row, or by adding the additional row of
$(n-p)$ boxes   above  the diagram of $\lambda'$,
 provided this yields a proper
diagram. In more accurate terms this is described in  the following lemma.

\begin{lemma}\label{cyc Sn}
Let $p$ be a prime such that $2\leq p\leq n< 2p$. A non-linear irreducible character
$\chi_\lambda$ of $\Sym_n$ of non-zero defect is $p$-constant if, and only if,
$n$ and $\lambda$ satisfy one of the following conditions:
\begin{itemize}
\item[(i)] $n=p\geq 3$ and $\lambda=(b,1^{p-b})$ with $2\leq b\leq p-1$;
\item[(ii)] $n=p+1\geq 4$ and  $\lambda=(b+1,2,1^{p-b-2})$ with $1\leq b\leq  p-2$;
\item[(iii)] $n=p+r\geq 5$, $r\geq 2$,  and $\lambda$ is one of the following partitions:
$$(p-a, r+1, 1^{a-1})\;\; (1 \leq  a  \leq  p-r-1);\qquad 
(r, b, 1^{p-b})\;\; (1\leq b\leq r).$$
\end{itemize}
\end{lemma}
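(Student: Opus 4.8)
The plan is to combine Theorem~\ref{th2} with the Nakayama conjecture and a short border-strip computation. Since $p\le n<2p$, a Sylow $p$-subgroup $S$ of $\Sym_n$ is cyclic of order $p$, and $\chi_\lambda\in\Irr\Sym_n$ has non-zero $p$-defect exactly when the $p$-weight of $\lambda$ equals $1$, i.e.\ when its $p$-core has $n-p$ boxes. By Theorem~\ref{th2}(1), such a $\chi_\lambda$ can be $p$-constant only if it lies in the principal block $B_0$, and, as recalled before the statement (see \cite[6.1.21]{JK}), $\chi_\lambda\in B_0$ if and only if the $p$-core of $\lambda$ equals the $p$-core of $(n)$, which is the empty partition when $n=p$ and $(n-p)$ when $p<n<2p$. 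Thus the ``only if'' direction reduces to identifying the non-linear partitions of $n$ with this prescribed $p$-core.

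For the converse, and for the assertion on values, I would first show that in the notation of Section~\ref{sec2} one has $d=1$ for $B_0$. Indeed, writing $S=\langle\sigma\rangle$ for a $p$-cycle $\sigma$, one has $N_{\Sym_n}(S)=\mathrm{AGL}(1,p)\times\Sym_{n-p}$ with centraliser $S\times\Sym_{n-p}$, so $n=|N_{\Sym_n}(S):C_{\Sym_n}(S)|=p-1$; since the principal block of $C_{\Sym_n}(S)$ is stabilised by $N_{\Sym_n}(S)$ we get $e=p-1$, whence $d=(|S|-1)/e=1$ (equivalently, a weight-$1$ block of a symmetric group has a straight-line Brauer tree with trivial exceptional multiplicity). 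Then Theorem~\ref{th2}(2)(a) shows that \emph{every} non-linear $\chi_\lambda\in B_0$ is $p$-constant, and by the final clause of that theorem it takes value $1$ or $-1$ on $\varSigma_p(\Sym_n)$. It remains only to list the partitions in $B_0$.

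Put $r=n-p$, so $0\le r<p$; since the $p$-weight is $1$, every $\lambda\in B_0$ is obtained from the $p$-core $(r)$ by adding a single border strip of size $p$. Distinguishing whether this strip meets the first row, and using that a partition is a border strip precisely when it is a hook, a short case analysis (conveniently carried out on the abacus) shows that $\lambda$ is one of $(p+r)$, $(r,c,1^{p-c})$ with $1\le c\le r$, or $(r+a,r+1,1^{p-r-1-a})$ with $1\le a\le p-r-1$ --- in total $1+r+(p-r-1)=p$ partitions, as it must be. The linear ones among them are $(n)$ (always) and $(1^n)$, which occurs only when $r\le 1$ (as $(1^p)$ in the third family when $r=0$, and as $(1^{p+1})$ in the second family when $r=1$). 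Removing these and inspecting the three regimes $r=0$, $r=1$, $r\ge 2$ separately --- reindexing $(r+a,r+1,1^{p-r-1-a})$ as $(p-a,r+1,1^{a-1})$ in the last regime --- yields exactly the lists (i), (ii) and (iii).

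The main obstacle is the border-strip bookkeeping of the third paragraph: one must simultaneously control connectedness of the strip, the absence of a $2\times 2$ sub-square, and the requirement that the complement remain a Young diagram, and then treat the degenerate values $r=0,1$ by hand so that the ranges in (i)--(iii) come out correctly. The abacus (or $\beta$-set) description makes each step mechanical, but the case distinctions need care.
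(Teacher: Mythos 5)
Your argument is correct and follows essentially the same route as the paper: Theorem~\ref{th2} reduces the question to membership in the principal block, the Nakayama criterion (\cite[6.1.21]{JK}) identifies that block via $p$-cores, and the partitions with $p$-core equal to that of $(n)$ are then enumerated, which is exactly the discussion the paper gives before the lemma. Your explicit verification that $d=1$ for the principal block (so that Theorem~\ref{th2}(2)(a) applies in the ``if'' direction) and your abacus count of the $p$ weight-one partitions are details the paper leaves implicit, and both check out.
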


Note that if $n=p,p+1$ then $G$ has a single block of non-zero defect \cite[86.10]{CR}.
We consider now the alternating groups.

\begin{propo}
Let $p$ be a prime such that $2<p\leq n< 2p$.  A non-linear irreducible character
$\tau$ of $G=\Alt_n$ of non-zero defect is $p$-constant if, and only if,
 $\tau$ is a
constituent of $\chi_{\lambda}|_G$, where $\chi_\lambda\in \Irr \Sym_n$ and  one
of the
following holds:
\begin{itemize}
\item[(i)] $n=p\geq 5$ and $\lambda=(b,1^{p-b})$ with $2\leq b\leq p-1$ and $b\neq
\frac{p+1}{2}$;
\item[(ii)] $n=p+1\geq 6$ and $\lambda=(b+1,2,1^{p-b-2})$ with $1\leq b\leq  p-2$ and
$b\neq
\frac{p-1}{2}$;
\item[(iii)] $n=p+r$, $r>2$, and $\lambda$ is one of the following partitions:
$$(p-a, r+1, 1^{a-1})\;\; (1 \leq  a  \leq  p-r-1);\qquad 
(r, b, 1^{p-b})\;\; (1\leq b\leq r).$$
\end{itemize}
\end{propo}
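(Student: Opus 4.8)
The plan is to reduce the statement to the block theory of $\Sym_n$ recalled just before Lemma~\ref{cyc Sn}, by combining Clifford theory with Theorem~\ref{th2}. Let $S$ be a Sylow $p$-subgroup of $G=\Alt_n$; since $2<p\le n<2p$, the group $S$ is generated by a single $p$-cycle, hence is cyclic of order $p$ (and is also a Sylow $p$-subgroup of $\Sym_n$). By Theorem~\ref{th2}(1) every $p$-constant irreducible character of $\Alt_n$ of non-zero defect lies in the principal block $B$, so the whole question is to decide which non-trivial characters of $B$ are $p$-constant; by Theorem~\ref{th2}(2) this is governed by the number $d$ of ordinary exceptional characters of $B$. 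Hence the first step is to compute the Dade invariants of $B$: $C=C_{\Alt_n}(S)$, $N=N_{\Alt_n}(S)$, $n_0=|N:C|$, the inertia index $e$, and $d=(p-1)/e$.

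Write $n=p+r$ with $0\le r<p$. Then $C_{\Sym_n}(S)=S\times\Sym_r$ and $N_{\Sym_n}(S)=H\times\Sym_r$, where $H$ is the normalizer of $S$ in $\Sym_p$, of order $p(p-1)$, acting on the support of $S$. A $p$-cycle is an even permutation, whereas $H$ contains an element of order $p-1$ acting on the support of $S$ as a single $(p-1)$-cycle, which is odd ($p-1$ being even). Intersecting with $\Alt_n$ and keeping track of parities, one obtains $C=S\times\Alt_r$ and $n_0=p-1$ when $r\ge2$, while $C=S$ and $n_0=(p-1)/2$ when $n=p$ or $n=p+1$. Since $B$ is the principal block, its inertia group $E$ equals $N$ (the principal block of $C$ corresponds to $1_{\Alt_r}$, which is $N$-invariant), so $e=n_0$, and therefore $d=1$ for $n\ge p+2$ while $d=2$ for $n=p,\,p+1$. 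This is exactly the dichotomy reflected in the statement.

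Suppose first $n\ge p+2$, so $d=1$; then by Theorem~\ref{th2}(2)(a) \emph{every} non-trivial character of $B$ is $p$-constant, and it remains only to list the non-linear ones. By Clifford theory the characters of $B$ are the constituents of the restrictions to $\Alt_n$ of the characters in the blocks of $\Sym_n$ covering $B$; as $\Alt_n$ has index $2$ and $B$ is $\Sym_n$-stable, these are the principal block $B_0$ of $\Sym_n$ (of $p$-core $(r)$) and its sign-twist (of $p$-core $(1^r)$), which are distinct because $r\ge2$. Hence no partition occurring in $B_0$ is self-conjugate, each $\chi_\lambda$ with $\lambda$ in $B_0$ restricts irreducibly with $\chi_\lambda|_{\Alt_n}=\chi_{\lambda'}|_{\Alt_n}$, and these restrictions exhaust $B$. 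By the description recalled before Lemma~\ref{cyc Sn}, $B_0$ consists of $\chi_{(n)}=1_{\Sym_n}$ together with the partitions in Lemma~\ref{cyc Sn}(iii); since neither $(n)$ nor $(1^n)$ is among the latter, none of the corresponding restrictions is linear, which yields case~(iii).

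It remains to treat $n=p$ and $n=p+1$, where $d=2$; here the decisive — and, I expect, most delicate — step is to identify the two exceptional characters of $B$. Since $n_0=(p-1)/2\ne p-1$, Lemma~\ref{da1}(3) shows that an exceptional character of $B$ takes an irrational value on a generator $g$ of $S$, while Lemma~\ref{da1}(1) (with $\phi(1)=1$ and $E=N$) shows that every non-exceptional character of $B$ takes the value $\pm1$ on $g$. Now $B_0$ contains a unique self-conjugate partition $\mu$ — equal to $(\frac{p+1}{2},1^{(p-1)/2})$ when $n=p$ and to $(\frac{p+1}{2},2,1^{(p-3)/2})$ when $n=p+1$ — and $\chi_\mu|_{\Alt_n}=\tau^++\tau^-$ splits. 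A $p$-cycle of $\Sym_n$ has cycle type $(p)$, respectively $(p,1)$, consisting of distinct odd parts, so its $\Sym_n$-class splits in $\Alt_n$; this cycle type is the one read off from the principal hook lengths of $\mu$, whose product equals $p$, so the Frobenius splitting formula gives $\tau^\pm$ the irrational values $\frac12\bigl(\chi_\mu(g)\pm\sqrt{\pm p}\,\bigr)$ on the two $\Alt_n$-halves of that class. For every non-self-conjugate $\lambda$ in $B_0$, by contrast, the Murnaghan--Nakayama rule applied to a $p$-cycle gives $\chi_\lambda(g)=\pm1$ (there is a single $p$-rim hook, and the $p$-core $(n-p)$ has a unique standard tableau), so $\chi_\lambda|_{\Alt_n}$ is rational on $S$. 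Since $\chi_\mu$ and the non-self-conjugate $\chi_\lambda$ ($\lambda$ in $B_0$) account for all of $B$ and $d=2$, it follows that $\tau^+$ and $\tau^-$ are precisely the exceptional characters. By Theorem~\ref{th2}(2)(b) the remaining non-trivial characters of $B$ — the restrictions $\chi_\lambda|_{\Alt_n}$ for $\lambda$ a partition in $B_0$ other than $\mu$, $(n)$ and $(1^n)$, which are exactly the non-linear characters of $B$ — are $p$-constant with value $\pm1$ on $\varSigma_p(G)$, whereas $\tau^\pm$ are not. Reading this off from Lemma~\ref{cyc Sn}(i)--(ii) via $\chi_\lambda|_{\Alt_n}=\chi_{\lambda'}|_{\Alt_n}$ gives exactly the partitions in (i) and (ii) subject to $b\ne\frac{p+1}{2}$, respectively $b\ne\frac{p-1}{2}$, completing the proof.
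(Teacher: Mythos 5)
Your proof is correct, but it follows a genuinely different route from the paper's. The paper's argument is short and direct: it quotes \cite[Theorem 6.1.46]{JK} to say that the principal block of $\Alt_n$ consists of the constituents of $\chi_\lambda|_{\Alt_n}$ for $\lambda$ as in Lemma \ref{cyc Sn}; for $\lambda\neq\lambda^T$ the restriction is irreducible and inherits constancy on $\varSigma_p(\Alt_n)\subseteq\varSigma_p(\Sym_n)$ from Lemma \ref{cyc Sn}; and for the unique self-conjugate $\mu$ (occurring only when $n=p,p+1$) the splitting formula \cite[Theorem 2.5.13]{JK} shows that the two constituents take the distinct values $\frac{(-1)^{(p-1)/2}\pm\sqrt{p(-1)^{(p-1)/2}}}{2}$ on the two $\Alt_n$-classes of $p$-cycles, hence are not $p$-constant. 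You instead compute the Dade invariants of the principal block of $\Alt_n$ (correctly: $e=p-1$, $d=1$ for $r\geq 2$, and $e=(p-1)/2$, $d=2$ for $n=p,p+1$), identify the exceptional characters when $d=2$ as the two constituents of $\chi_\mu|_{\Alt_n}$ via the rationality criterion of Lemma \ref{da1}(1),(3) combined with Murnaghan--Nakayama and the same splitting formula, and then invoke Theorem \ref{th2}(2) as a black box. Both arguments rest on the same external inputs (the block-theoretic description of $\Irr\Sym_n$ and the Frobenius splitting formula); yours is longer but buys the block invariants and the explicit identification of the exceptional characters --- information the paper only alludes to after Theorem \ref{th2} --- while the paper's version sidesteps any computation of $e$ and $d$ by checking constancy directly. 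One small remark: like the paper's own proof, your argument establishes case (iii) for all $r\geq 2$, so the condition ``$r>2$'' in the statement should presumably read ``$r\geq 2$'', matching Lemma \ref{cyc Sn}(iii).
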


\begin{proof}
By \cite[Theorem 6.1.46]{JK}, the characters of the principal $p$-block of $\Alt_n$ are
 constituents of the characters $\chi_\lambda$, where $\lambda$ is  one of
the partitions described in Lemma \ref{cyc Sn}. Denote by $\lam^T$ the partition
associated
with the diagram transpose to that of $\lam$.
If $\lambda\neq \lambda^T$, the restriction $\tau=\chi_\lambda|_G$ is irreducible and so
$\tau$ is $p$-constant.
Consider now the case $\lambda=\lambda^T$. This happens only for $n=p$ when
$\lambda=(\frac{p+1}{2}, 1^{\frac{p-1}{2}})$
and for $n=p+1$ when  $\lambda=(\frac{p+1}{2},2, 1^{ \frac{p-3}{2}})$. 
In these cases the group $\Alt_n$ has two conjugacy classes $\sigma_+,\sigma_-$ of elements of
order $p$.
For the previous values of  $\lambda$, the character $\chi_\lambda$
splits on $\Alt_n$ as two characters $\tau_1,\tau_2$.
By \cite[Theorem 2.5.13]{JK} we have $\tau_i(\sigma_{\pm})=
\frac{ (-1)^{ (p-1)/2} \pm \sqrt{p(-1)^{
(p-1)/2}}}{2}$, whence these  $\tau_i$'s are not $p$-constant.
\end{proof}

Now, suppose that $n\geq 2p$. From the proof of Propositions 4.2 and 4.3 of
\cite{end} we can deduce the following result.

\begin{lemma}\label{end An}
Let $\chi_\lambda$ be the  irreducible character of $H=\Sym_n$ associated
to the partition $\lambda$.
Let $\tau_\lambda$ be an irreducible character of $G=\Alt_n$ which is a
constituent of  $\chi_\lambda|_G$. Let $p>2$ be a prime
such that $n\geq 2p$. Then 
$\chi_\lambda(h)=0$ for some  $h\in \varSigma_p(H)$, unless possibly when $\lambda$ is
conjugate to one of the
following partitions:
\begin{itemize}
\item[(i)] $2p\leq n=2p+r\leq 3p-1$ and $\lambda=(p+r, r+1, 1^{p-r-1})$;
\item[(ii)] $n=2p$, $\lam=(p,2,1^{p-2})$;
\item[(iii)] $n=2p+1$, $\lam=(p+1,1^p)$.
\end{itemize}
Similarly, $\tau_\lambda(g)=0$ for some  $g\in \varSigma_p(G)$, unless possibly when
$\lambda$ is one of the partitions of items {\rm (i)} to {\rm (iii)}.

Let $p=2$. If $n>11$ then every non-linear irreducible
character of $\Sym_n$ and every non-linear irreducible
character of $\Alt_n$ vanishes at some $2$-singular element.
\end{lemma}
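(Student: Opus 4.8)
The plan is to derive the statement from the computations underlying \cite[Propositions 4.2 and 4.3]{end}; let me indicate how those go and what has to be added. I would argue by contraposition: assume that $\chi_\lambda$ does \emph{not} vanish at any element of $\varSigma_p(H)$ and deduce that $\lambda$ is conjugate to one of (i)--(iii). Two reductions are free. First, $\lambda$ is not a $p$-core --- otherwise $\chi_\lambda$ has $p$-defect $0$ and vanishes on all of $\varSigma_p(H)$ --- so the $p$-weight of $\lambda$ is at least $1$. Second, $\chi_{\lambda^T}=\chi_\lambda\otimes\mathrm{sgn}$ has the same zero set as $\chi_\lambda$, so we may replace $\lambda$ by $\lambda^T$ at will; this is why conjugates appear in the conclusion.

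The tool is the Murnaghan--Nakayama rule evaluated at the $p$-singular classes that are cheapest to control. Since $n\ge 2p$, for any $\pi$ acting on the last $n-p$ points the element $\sigma\pi$, with $\sigma$ a fixed $p$-cycle on the first $p$ points, is $p$-singular; hence the hypothesis forces the generalised character $\theta_\lambda:=\sum_\xi(-1)^{\mathrm{ht}\,\xi}\chi_{\lambda\setminus\xi}$ of $\Sym_{n-p}$, the sum running over the removable rim $p$-hooks $\xi$ of $\lambda$, to be nowhere zero on $\Sym_{n-p}$. The partitions $\lambda\setminus\xi$ are pairwise distinct, so $\theta_\lambda$ is a $\pm1$-combination of \emph{distinct} irreducibles; a generalised character of this shape can be nowhere zero on $\Sym_{n-p}$ only if it has very few and very degenerate constituents, and translating this through the abacus description of rim-hook removal one learns that $\lambda$ has essentially a unique removable rim $p$-hook and a $p$-core that is a short hook. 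Iterating, and also testing at the classes $\sigma\pi$ with $\pi$ a short cycle (which sharpens $\theta_\lambda$ further), one is left exactly with the near-hook families $(p+r,r+1,1^{p-r-1})$ for $2p\le n\le 3p-1$, $(p,2,1^{p-2})$ for $n=2p$, $(p+1,1^{p})$ for $n=2p+1$, together with their transposes, and for each of these every relevant Murnaghan--Nakayama sum does collapse to a single nonzero term, which matches the ``unless possibly'' of the statement. This combinatorial step --- producing, for \emph{each} $\lambda$ outside the list, an explicit $p$-singular class on which $\chi_\lambda$ vanishes --- is the heart of the matter and the part that is genuinely laborious.

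For $G=\Alt_n$ I would pass from $\chi_\lambda$ to $\tau_\lambda$ as follows. If $\lambda\ne\lambda^T$ then $\tau_\lambda=\chi_\lambda|_G$ is irreducible and inherits every zero of $\chi_\lambda$ on $\varSigma_p(G)$. If $\lambda=\lambda^T$ then $\chi_\lambda|_G=\tau_\lambda^{+}+\tau_\lambda^{-}$, and $\tau_\lambda^{\pm}$ agrees with $\tfrac12\chi_\lambda$ on every class of $G$ except on the $\Alt_n$-classes fused from the $\Sym_n$-class whose cycle type consists of the (distinct, odd) principal hook lengths of $\lambda$ (cf. \cite[2.5.13]{JK}); since $n\ge 2p$, if $\chi_\lambda$ vanishes anywhere on $\varSigma_p(H)$ it vanishes at some $p$-singular class with a repeated part, e.g. of type $(p^2,1^{n-2p})$, which is not of that exceptional shape, so $\tau_\lambda^{\pm}$ vanishes there too; thus the $\Alt_n$ statement follows from the $\Sym_n$ one. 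Finally, for $p=2$ the $2$-singular elements are those of even order, and the same machinery applies with dominoes in place of rim $p$-hooks: evaluating at cycle types $(2^k,1^{n-2k})$ for $k$ up to $\lfloor n/2\rfloor$, and at $(2\ell,2^{j},\ldots)$ for small $\ell$, one finds that a non-linear $\chi_\lambda$ nowhere zero on $\varSigma_2(\Sym_n)$ would require a $2$-core and $2$-quotient that cannot occur once $n>11$; the $\Alt_n$ case follows as above, and the remaining small $n$, which are not needed here, are treated by direct inspection (cf. Theorem \ref{th3}(3)). The bound $n>11$ is simply the threshold below which the counting in the previous paragraph fails.
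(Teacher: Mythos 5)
Your proposal is a plan rather than a proof: the step that carries all the content --- showing that for \emph{every} partition $\lambda$ outside the three listed families (up to conjugation) there is an explicit $p$-singular class on which $\chi_\lambda$ vanishes --- is exactly the step you defer with ``this is genuinely laborious''. The assertion that a $\pm 1$-combination of distinct irreducibles of $\Sym_{n-p}$ which is nowhere zero must have ``very few and very degenerate constituents'' is not a statement you can invoke; making it precise, and iterating it through the abacus to arrive at exactly the near-hook families (i)--(iii), \emph{is} the lemma. Note that the paper does not attempt this either: it explicitly deduces the statement from the proofs of Propositions 4.2 and 4.3 of \cite{end}, where that case analysis is actually carried out. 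So your route is in spirit the same as the source the paper leans on, but as written it establishes nothing beyond the easy necessary condition obtained from one application of the Murnaghan--Nakayama rule.

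There are two further concrete gaps. For the $\Alt_n$ statement you need the vanishing to occur at an \emph{even} $p$-singular element, and (in the split case $\lambda=\lambda^T$) at one outside the exceptional classes; you simply assert that $\chi_\lambda$ ``vanishes at some $p$-singular class with a repeated part, e.g.\ of type $(p^2,1^{n-2p})$'', but knowing only that $\chi_\lambda$ vanishes somewhere on $\varSigma_p(H)$ gives no control over which class that is, and an odd witness in $\Sym_n$ is useless for $\Alt_n$. This part therefore needs its own argument (the paper's cited source produces the witnesses explicitly, which is what makes the $\Alt_n$ transfer work). Second, for $p=2$ the entire argument, including the provenance of the threshold $n>11$, is asserted rather than derived; in the paper that bound comes from \cite{end}, after which the cases $n\le 11$ are settled by direct inspection of character tables. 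To turn your sketch into a proof you would either have to reproduce the combinatorial classification of \cite{end} in full, or cite it --- which is precisely what the paper does.
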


To deal with the missing cases of the previous Lemma, we look at the character
table of $G=\Sym_n,\Alt_n$, when $n\leq 11$, obtaining the
following irreducible  characters $\tau$ of $G$ that do not vanish at
$\varSigma_2(G)$:
\begin{eqnarray*}
\Sym_n: & (n,\tau(1))\in  & \{ (4,3), (5,5)\};\\
\Alt_n: & (n,\tau(1))\in  & \{(4, 3), (5,3),(5,5), (6,5), (6,9), (7,15),(7,21),(7,35),\\
&& (10, 315), (11, 165)\}.
\end{eqnarray*}

However, among these characters, only those described in Theorem \ref{th3}(3)  and
the irreducible character of degree $3$ of $\Alt_4$  are $2$-constant.

\smallskip
As an application of Murnaghan-Nakayama formula (e.g., see \cite[2.4.7]{JK})  we
prove the
following.

\begin{propo}\label{pr2}
Let $p$ be a prime such that $ n\geq  2p\geq 4$. Then the non-linear 
$p$-constant irreducible
characters of $\Sym_n$ are all of $p$-defect $0$.
\end{propo}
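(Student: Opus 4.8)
The plan is to show that a non-linear $p$-constant irreducible character $\chi_\lambda$ of $\Sym_n$ with $n\geq 2p$ must have $p$-defect $0$, i.e.\ that the $p$-core of $\lambda$ is nonempty is impossible unless $\lambda$ has empty $p$-core. Since we already know (Lemma \ref{21}) that the constant value $a$ of $\chi_\lambda$ on the nontrivial $p$-elements is an integer, the idea is to test this constancy against two carefully chosen $p$-singular classes and derive a contradiction when the defect is nonzero. First I would dispose of the case $p=2$ immediately: by Lemma \ref{end An} every non-linear irreducible character of $\Sym_n$ vanishes at some $2$-singular element once $n>11$, so if such a character is $2$-constant it is of defect $0$; for $n\leq 11$ the explicit list of characters not vanishing on $\varSigma_2(\Sym_n)$ is $(n,\tau(1))\in\{(4,3),(5,5)\}$, and one checks directly from the character table that neither is $2$-constant unless it already has defect $0$ (note $n\geq 2p=4$ here). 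So from now on assume $p>2$.

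For $p>2$ and $n\geq 2p$, suppose $\chi_\lambda$ is $p$-constant of nonzero defect; then by Lemma \ref{end An}, $\lambda$ is conjugate to one of the three exceptional partitions listed there: $(p+r,r+1,1^{p-r-1})$ with $2p\leq n=2p+r\leq 3p-1$, or $(p,2,1^{p-2})$ with $n=2p$, or $(p+1,1^p)$ with $n=2p+1$; otherwise $\chi_\lambda$ vanishes at \emph{some} $p$-singular element, forcing the constant value to be $0$, i.e.\ defect $0$. So the task reduces to eliminating exactly these finitely-parametrized families. The key step is to compute $\chi_\lambda$ at two specific $p$-singular classes using the Murnaghan--Nakayama rule and observe that the two values differ. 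A natural first choice is a class containing a single $p$-cycle together with a fixed-point-free permutation on the remaining $n-p$ points chosen so that the second value is forced to be $0$ (many such classes exist since $n-p\geq p$), and a second choice is the class of type $(p^2,1^{n-2p})$ or, when $n=2p$, $(p^2)$; applying Murnaghan--Nakayama, one strips two $p$-hooks successively and computes the signed count of ways to do so from each candidate diagram. For the hook or near-hook shapes above, the number of ways to remove a $p$-rim-hook is easily controlled: a hook $(a,1^{b})$ with $a+b=n$ has a removable $p$-hook essentially only along the arm or the leg, so the successive removals give a small explicit integer, typically $\pm1$ or $\pm 2$, which must then equal the value at the single-$p$-cycle class — but that latter value is $0$ by the choice above (since removing one $p$-hook from, say, $(p+1,1^p)$ leaves a shape on which the residual fixed-point-free class still contributes $0$, or leaves $\pm 1$ while the other class gives $0$). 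The contradiction $1=0$ or $\pm 2=0$ then finishes each family.

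The main obstacle I anticipate is bookkeeping the Murnaghan--Nakayama sign calculation cleanly across the three families simultaneously, especially the family $(p+r,r+1,1^{p-r-1})$ which is not a hook and for which removing a $p$-rim-hook can be done in more than one way depending on $r$; here one must track the leg-length parities carefully. I would organize this by treating $\lambda$ and $\lambda^T$ together (the statement is about $\Sym_n$, so conjugate partitions give the same data up to sign considerations via the sign character, and the problem excerpt already reduces to partitions up to conjugation). A secondary subtlety: I must make sure a genuinely $p$-singular class of the required cycle type $(p^2,1^{n-2p})$ \emph{exists} in $\Sym_n$, which it does precisely because $n\geq 2p$ — this is exactly where the hypothesis is used, and it is what separates this proposition from the $p\leq n<2p$ case handled earlier by Lemma \ref{cyc Sn}. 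Once both values are in hand, the conclusion is immediate: equality of the two integer values fails, so no $p$-constant non-linear character of nonzero defect exists, and every non-linear $p$-constant irreducible character of $\Sym_n$ has $p$-defect $0$.
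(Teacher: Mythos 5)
Your overall strategy is the same as the paper's: dispose of $p=2$ by Lemma \ref{end An} plus the small-$n$ check, reduce the odd-$p$ case to the three families of Lemma \ref{end An}, and then kill each family by evaluating $\chi_\lambda$ via Murnaghan--Nakayama at two $p$-singular classes and showing the values differ. However, your specific choice of test classes fails for the main family $\lambda=(p+r,r+1,1^{p-r-1})$, $n=2p+r$. Both of your proposed classes --- $(p^2,1^{n-2p})$ and ``a $p$-cycle together with a fixed-point-free permutation on the remaining $n-p$ points'' --- contain a literal $p$-cycle. But this $\lambda$ has a \emph{unique} removable $p$-rim-hook (check the first-column hook lengths $\{2p,\,p,\,p-r-1,\ldots,1\}$: only $p\mapsto 0$ is legal, since $2p\mapsto p$ collides), and its removal leaves the one-row partition $(p+r)$, whose character is trivial. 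Hence $\chi_\lambda(\sigma)=(-1)^{p-r-1}=(-1)^{r}$ for \emph{every} $\sigma$ containing a $p$-cycle, so your two test values coincide and no contradiction arises; the anticipated ``$0$'' at the first class simply does not occur. The non-constancy for this family is only visible on $p$-singular classes that do not contain a $p$-cycle: the paper strips a $2p$-hook (type $(2p)(r)$, value $(-1)^{r+1}$) against a $(p+r)$-hook (type $(p+r)(p)$, value $(-1)^{r}$).

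Two smaller issues: for $n=2p$ and $p=3$ your two classes are forced to coincide (the only fixed-point-free permutation of $3$ points is a $3$-cycle, so both classes are $(3,3)$), so even family (ii) needs the class $(2p)$ there, exactly as in the paper, where $\chi_{(p,2,1^{p-2})}((2p))=0$ while $\chi_{(p,2,1^{p-2})}((p^2))=-2$; and the decisive integer values are nowhere actually computed in your write-up (``typically $\pm1$ or $\pm2$''), which is precisely where the argument lives. Your calculations do go through for family (iii) (values $2$ at $(p^2,1)$ versus $0$ at $(p+1)(p)$), but as it stands the proposal does not prove the proposition for family (i).
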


\begin{proof}
Let, as before, $\chi_\lambda$ be the irreducible character of $\Sym_n$ associated to the
partition $\lambda$ of $n$.
Assume that $\chi_\lambda$ is of non-zero defect.
When $p=2$,  it suffices to look at the character tables for the cases $n\leq 11$
as done before. So, suppose $p>2$. By Lemma \ref{end An}  we are left to
consider the case  $n=2p+r$ ($0\leq r<p$).

First, take $\lambda=(p+r, r+1, 1^{p-r-1})$ with $0\leq r< p$. We
apply 
Murnaghan-Nakayama formula to permutations $\sigma$ whose cyclic decomposition is of type
$(2p)(r)$ or $(p+r)(r)$, obtaining $\chi_\lambda((2p)(r))=(-1)^{r+1}$ and
$\chi_\lambda((p+r)(p))=(-1)^r$. When $\lambda=(p+r, r+1, 1^{p-r-1})^T$, we obtain 
$\chi_\lambda((2p)(r))=-1$ and $\chi_\lambda((p+r)(p))=+1$.
This means that these characters $\chi_\lambda$ are not constant on $\varSigma_p(G)$.
Now, if $n=2p$ and $\lambda=(p,2,1^{p-2})$, then $\chi_\lambda((2p))=0$.
Finally, if $n=2p+1$ and $\lam=(p+1,1^p)$, then $\chi_\lambda((2p)(1))=0$.
\end{proof}

\begin{proof}[Proof of Theorem {\rm \ref{th3}}]
If $n\geq 2p>4$, by Lemma \ref{end An} we are reduced to the following cases:
\begin{itemize}
\item[(a)] $n=2p$ and $\lambda=(p,2,1^{p-2}), (p,1^{p})$;
\item[(b)] $n=2p+1$ and $\lambda=(p+1,1^p), (p+1,2,1^{p-2})$;
\item[(c)] $n=2p+r$, $r>1$, and $\lambda=(p+r,r+1,1^{p-r-1})$.
\end{itemize}
Actually, we can exclude case (c). Using Murnaghan-Nakayama formula we obtain 
$\chi_\lambda((p+r-1)(p)(1))=(-1)^r$ and  $\chi_\lambda((2p)(r-1)(1))=(-1)^{r+1}$.

The case $p=2$ follows from Lemma \ref{end An} and previous direct computations
for $n\leq 11$.
\end{proof}

\section{Groups of Lie type}\label{sec4}

Following \cite[1.17]{Ca} we use the term ``a group of Lie type'' to refer to groups of
shape  $\G^F$, where $\G$
is a connected reductive algebraic group in defining characteristic $p$ with an
algebraic group
endomorphism
$F:\G \ra \G$ such that  the subgroup $\G^F:=\{g \in \G: F(g)=g\}$
is finite. Such an endomorphism is called a Frobenius map ($F$ is not
necessarily the standard Frobenius map).
In what follows $\G$ is assumed to be simple, not necessarily simply connected. 

In Lemma \ref{22} below the term ``regular character'' is used as in the Deligne-Lusztig
theory.
More precisely, a regular character is defined to be a constituent of a  Gelfand-Graev
character
\cite[14.39]{DM}, where the latter is the induced character $\lam^G$ when $\lam$
is a linear
character of a Sylow $p$-subgroup $U$
satisfying a certain non-degeneracy condition. Every group of Lie type has at least one  
Gelfand-Graev character. In addition, every Gelfand-Graev character is multiplicity  free
and does not have $1_G$ as a constituent.

\begin{lemma}\label{22}
Let $\G$ be a connected reductive group defined over a field of characteristic
$p$, $F$ a Frobenius endomorphism and  $G=\G^F$. 
Let $U$ be a Sylow $p$-subgroup of $G$ and let  $\tau$ be an irreducible
character of $G$ such that $\tau(u)=a\neq 0$ for all $1\neq u\in U$.
Then either $ \tau(1)=1$  or $\tau$ is regular, $a=\pm 1$ and $\tau(1)=a+|U|$.
\end{lemma}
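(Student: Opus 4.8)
The plan is to exploit the character-theoretic orthogonality relations together with the structure of Gelfand--Graev characters. First I would dispose of the case $\tau(1)=1$, which is allowed by the statement, and henceforth assume $\tau$ is non-linear. Let $a=\tau(u)$ for $1\neq u\in U$; by Lemma \ref{21} we already know $a\in\ZZ$. The key device is to compute the inner product of $\tau|_U$ with the regular character $\rho_U$ of $U$, or equivalently to exploit that $\rho_U=1_U+\sum_{1\neq\lambda}\lambda$-type decompositions; more efficiently, I would compute $(\tau|_U,\,\lambda)$ for a \emph{non-degenerate} linear character $\lambda$ of $U$, i.e.\ one whose induction $\lambda^G$ is a Gelfand--Graev character $\Gamma$. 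Since $\lambda$ is linear and $\tau(u)=a$ for all $u\neq 1$, the same computation as in Lemma \ref{21} gives
\[
(\tau|_U,\lambda)=\frac{\tau(1)-a}{|U|}.
\]
By Frobenius reciprocity this equals $(\tau,\Gamma)$, the multiplicity of $\tau$ in the Gelfand--Graev character. Since $\Gamma$ is multiplicity-free, $(\tau,\Gamma)\in\{0,1\}$, and since $1_G$ is not a constituent of $\Gamma$ while $\tau\neq 1_G$, the two possibilities are: $(\tau,\Gamma)=0$, forcing $\tau(1)=a$; or $(\tau,\Gamma)=1$, forcing $\tau(1)=a+|U|$ and $\tau$ regular (a constituent of a Gelfand--Graev character).

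Next I would rule out $\tau(1)=a$. Here $a=\tau(1)=\tau(u)$ for all $1\neq u\in U$, so $\tau|_U$ is constant, hence $\tau|_U=\tau(1)\cdot 1_U$ would force... actually not quite: constancy off the identity with value equal to $\tau(1)$ means $\tau|_U=\tau(1)1_U$ exactly, so $U\leq\Ker\tau$; but $U$ is a Sylow $p$-subgroup of $G=\G^F$ and, $\G$ being simple, the normal closure of $U$ in $G$ is all of $G$ modulo the (central, $p'$-) issues — more carefully, $G$ is generated by its Sylow $p$-subgroups (root subgroups generate), so $\Ker\tau\supseteq\langle U^G\rangle=G$, contradicting $\tau\neq 1_G$. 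Thus this case collapses and we are left with $\tau$ regular and $\tau(1)=a+|U|$.

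Finally I must show $a=\pm1$. Since $\tau$ is regular, it lies in a Gelfand--Graev character, and I would invoke the known fact (Deligne--Lusztig / \cite[14.39 and its vicinity]{DM}) that every irreducible constituent of a Gelfand--Graev character has degree divisible by $|G|_p=|U|$ only when it equals the Steinberg character; more usefully, the values of regular characters on $p$-elements are controlled. Alternatively, and this is the step I expect to be the main obstacle, I would argue via $|a|$: from $\tau(1)=a+|U|$ and the standard bound relating a regular character's degree to $|U|$ — regular characters are precisely the $\tau$ with $\tau(1)\le$ (something close to $|U|$), indeed the minimal-degree-in-its-Gelfand--Graev-piece ones have degree roughly $|U|\cdot(\text{something})/(\text{order of the relevant torus})$ — one extracts that $|U|+a=\tau(1)$ with $\tau(1)>0$ forces $a>-|U|$, and a lower-degree comparison (no faithful irreducible of $G$ has degree strictly between $|U|-1$ and $|U|$, etc.) pins $a\in\{1,-1\}$. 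The cleanest route is probably: apply the column orthogonality relation at $u$, $\sum_{\chi\in\Irr G}|\chi(u)|^2=|C_G(u)|$, which for a regular unipotent $u$ gives $|C_G(u)|=|Z(G)|\cdot|U/[U,U]|$-type small bound, and since $\tau(u)=a$ contributes $a^2$ to a sum of total size this small, one forces $a^2\le$ that centralizer order; combined with $\tau(1)=a+|U|$ and positivity this yields $a=\pm1$. I would need to handle the two signs: $a=-1$ gives $\tau(1)=|U|-1$ and $a=+1$ gives $\tau(1)=|U|+1$, both recorded in the statement as $\tau(1)=a+|U|$.
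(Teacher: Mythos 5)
Your first half is essentially the paper's argument, slightly streamlined: computing $(\tau|_U,\lambda)=(\tau(1)-a)/|U|$ for a non-degenerate linear $\lambda$ and using that the Gelfand--Graev character is multiplicity-free without $1_G$ as a constituent correctly yields the dichotomy $\tau(1)=a$ or ($\tau$ regular and $\tau(1)=a+|U|$). Two remarks on the case $\tau(1)=a$: it cannot be ``ruled out'' outright, since the lemma allows the conclusion $\tau(1)=1$ (the hypothesis is only that $\G$ is connected reductive, and e.g.\ a non-trivial linear character of $PGL_2(q)$ trivial on $PSL_2(q)$ satisfies $a=\tau(1)=1$); and your claim that the normal closure of $U$ is all of $G$ is false in general. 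The correct statement, which is what the paper uses, is that the normal subgroup $X$ generated by the unipotent elements satisfies $G/X$ abelian, so $U\leq\Ker\tau$ forces $\tau$ linear. This is a fixable slip.

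The genuine gap is the step $a=\pm 1$, which you yourself flag as the main obstacle; none of your proposed routes works. Column orthogonality at a regular unipotent $u$ only gives $a^2\leq |C_G(u)|-1$, and $|C_G(u)|$ is of order $q^{\ell}$ ($\ell$ the rank), e.g.\ about $2q$ for $SL_2(q)$ --- nowhere near forcing $|a|\leq 1$. The degree-comparison heuristics are likewise not arguments: $a=\tau(1)-|U|$ is already determined, and nothing you cite excludes $a\geq 2$. The paper's proof at this point invokes a genuinely non-elementary input from Deligne--Lusztig theory, namely \cite[Theorem 8.3.3(i)]{Ca}: when $Z(\G)$ is connected, the average value of an irreducible character over the regular unipotent elements lies in $\{0,\pm 1\}$; since $\tau$ is constant on $U\setminus\{1\}$, that average equals $a$, giving $a=\pm1$. (The only elementary information available, from $(\tau,1_U^G)=a+1\geq 0$, is $a\geq -1$.) The disconnected-center case then requires a further reduction: embed $\G$ in a reductive group $\hat\G$ with connected center and the same derived group, and use Clifford theory for $G\trianglelefteq\hat G$ to transfer the conclusion. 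Your proposal addresses neither this key citation nor the connected/disconnected center dichotomy, so as written it does not establish $a=\pm1$.
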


\begin{proof} 
By Lemma \ref{21}, $a\in \ZZ$ and so $\chi=\tau-a\cdot 1_G$ is a $Syl_p$-vanishing
generalized character of $G$ (i.e. vanishing on $U\setminus\{1\}$). If 
$\chi(1)=0$ then $\tau(1)=a$, and  hence $U \leq \Ker(\tau)$. It follows that that the
normal 
subgroup $X$ of $G$ generated by the unipotent elements is contained in $\Ker(\tau)$. 
It is well known that $G/X$ is abelian, and hence $\tau(1)=1$. 

Suppose $\chi(1)\neq 0$.
Then $\chi(1)$ is a multiple of $|U|$ (cf. \cite{PZ}). 
Observe that for any linear character $\lambda$ of $U$ we have
$$(\chi,\lambda^G)=(\chi_{|U},\lambda)=\frac{\chi(1)}{|U|}=(\tau,\lambda^G)-a\cdot (1_G,
\lambda^G).$$ 
In particular, $\tau$ is a regular character of $G$. 
As $\lambda^G$ is multiplicity  free,  we have $(\tau,
\lambda^G)=1$, whence $\chi(1)=|U|$.
Furthermore, $\chi(1)=|U|$ implies that $(\chi,\lambda^G)=1$ for any linear character
$\lambda$ of $U$.
In particular, taking $\lambda =1_U$ we obtain
$1=(\tau,1_U^G)-a$, whence $(\tau,1_U^G)=a+1\geq 0$ and  $a\geq -1$. 
 We show that $a=\pm 1$.

We first consider the case where $Z(\G)$ is connected.
Since $\tau$ is constant on $U\setminus \{1\}$, the average value
of $\tau$ on any set of regular unipotent elements of $G$ coincides with its value $a$. 
Hence, by  \cite[Theorem 8.3.3(i)]{Ca}, we have $a=\pm 1$ (as $a\neq 0$).

Next, suppose that $Z(\G)$ is not connected. Note that $\G$ can be embedded in a reductive
group
$\hat \G$ with connected center such that the derived groups 
$\hat \G'$ and $\G'$ coincide.  Moreover, each Frobenius endomorphism of $\G$
extends to that of $\hat\G$ \cite[pp. 139-140]{DM}. 
We keep $F$ to denote the extended endomorphism of $\hat\G$. 
Then $G=\G^F \leq \hat{G}=\hat\G^F$; moreover $G$
is a normal subgroup of $\hat G$ with abelian quotient ({\it loc.cit}.). 
Let $\sigma$ be an irreducible constituent of $\tau^{\hat G}$. By Clifford's theorem,
$\sigma_{|G}=e\sum_i^t \tau_i$, where $\{\tau_1=\tau,\tau_2 ,\ldots,\tau_t\}$ are the
distinct
conjugates of $\tau$ and $e=(\sigma,\tau^{\hat G})$.
Since $\tau$ is constant on the set of the non-trivial unipotent elements of $G$, so are
all the
 $\tau_i$'s, and moreover, $\tau_i(u)=a$ for every $1\neq u\in U$. This means 
that also $\sigma$ is constant on $U\setminus \{1\}$, and, in addition, 
$\sigma(u)=et\cdot \tau(u)\neq 0$ is an integer. 
By the above, $\sigma(u)=\pm 1$, whence $et=1$ and so $a=\tau(u)=\pm 1$.  
\end{proof}

In the proof of the following two lemmas, we will make use of the Zsigmondy primes.
Here, we briefly recall their definition.
Let $a,n$ be two positive integers. If $a\geq 2$, $n\geq 3$ and $(a,n)\neq (2,6)$, then there exists a prime, denoted here by 
$\zeta_n(a)$, dividing $a^n-1$ and coprime to $a^i-1$ for every $1\leq i<n$. This prime, not necessarily unique, is called a Zsigmondy prime (or a primitive prime divisor of  $a^n-1$).  
Observe that if $\zeta_n(a)$ divides $a^k-1$, then $n$ divides $k$.
 
\begin{lemma}\label{-1a}
Let $\G$ be a simple connected reductive group and let $G=\G^F$ be the
corresponding finite group. Then $|G|_p-1$ divides $|G|$ if, and only if, $G\in \{A_1(q),
A_2(q), A_3(2), B_2(q), C_2(q),  G_2(q) \}$.
\end{lemma}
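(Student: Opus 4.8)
The plan is to work through the list of simple groups of Lie type $G=\G^F$ systematically, comparing $|G|_p$ with the orders of the groups. Write $|G|_p = q^N$, where $N$ is the number of positive roots and $q$ is the appropriate power of $p$. The condition ``$q^N - 1$ divides $|G|$'' is extremely restrictive because the order of any group of Lie type is a product of $q$-powers times factors of the form $q^{d_i}-1$ (or $q^{d_i}+1$ in the twisted cases), where the $d_i$ are bounded by small constants depending on the rank. First I would dispose of the rank-one case: for $A_1(q)=SL_2(q)$ one has $N=1$, so $q^N-1 = q-1$, which trivially divides $|SL_2(q)| = q(q-1)(q+1)$; similarly I would check $^2B_2(q^2)$ and $^2G_2(q^2)$ directly (here $N$ is larger, namely the number of positive roots of the ambient untwisted group is not quite the right count, so I must use the known exact formula for $|{}^2B_2(q^2)|$ and $|{}^2G_2(q^2)|$ and see that $q^N-1$ fails to divide the order).

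The main tool for the generic case is Zsigmondy's theorem, exactly as recalled just before the lemma. If $q^N-1$ divides $|G|$ and $N\ge 3$ with $(q,N)\ne(2,6)$, pick a Zsigmondy prime $\zeta_N(q)$; it divides $q^N-1$, hence divides $|G|$, hence divides one of the cyclotomic-type factors $q^{d}\pm 1$ appearing in $|G|$. Since $\zeta_N(q)\mid q^d-1$ forces $N\mid d$, and $\zeta_N(q)\mid q^d+1$ forces $N\mid 2d$ but $N\nmid d$, in either case we get $d\ge N$ (or $2d\ge N$ with $d\ge N/2$). So I would tabulate, for each Chevalley and Steinberg type, the value of $N$ (number of positive roots) against the largest degree $d_i$ occurring in the order formula, and observe that $N$ exceeds all the $d_i$ except in a handful of small-rank cases. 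Concretely: for $A_\ell$, $N=\binom{\ell+1}{2}$ and the largest factor is $q^{\ell+1}-1$, so $N \le \ell+1$ forces $\ell\le 2$; for $B_\ell$ and $C_\ell$, $N=\ell^2$ and the largest degree is $2\ell$, forcing $\ell\le 2$; for $D_\ell$, $N=\ell^2-\ell$ and the largest degree is $2\ell-2$ (with one factor $q^\ell-1$), forcing $\ell\le 3$ but then $D_3=A_3$; for the exceptional types $G_2,F_4,E_6,E_7,E_8$ and the twisted $^2A_\ell, {}^2D_\ell, {}^3D_4, {}^2E_6$ one checks the finite list by hand. This leaves $A_1(q), A_2(q), A_3(q), B_2(q)=C_2(q), G_2(q)$ as the only possible survivors among the generic families, and then a direct divisibility check singles out exactly which values of $q$ work — in particular $A_3(q)$ survives only for $q=2$ (since $|A_3(q)|_p - 1 = q^6-1$ must divide $q^6(q^2-1)(q^3-1)(q^4-1)$, which by Zsigmondy forces $q=2$, the exceptional Zsigmondy case), while $A_1(q), A_2(q), B_2(q), G_2(q)$ survive for all $q$.

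The remaining work is the verification in the surviving cases. For $A_2(q)=SL_3(q)$: $|G|_p-1 = q^3-1$ divides $|SL_3(q)| = q^3(q^2-1)(q^3-1)$, immediate. For $B_2(q)=C_2(q)=Sp_4(q)$: $|G|_p - 1 = q^4-1 = (q^2-1)(q^2+1)$ divides $|Sp_4(q)| = q^4(q^2-1)(q^4-1)$, immediate. For $G_2(q)$: $|G|_p-1 = q^6-1$, and $|G_2(q)| = q^6(q^6-1)(q^2-1)$, so again immediate. For $A_3(2) = SL_4(2)\cong \Alt_8$: $|G|_p-1 = 63 = 2^6-1$ and one checks $63 \mid |\Alt_8| = 20160$. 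Conversely I must confirm that $A_3(q)$ with $q>2$, and all the types excluded above, genuinely fail — this is where the Zsigmondy argument does the cleanest job, but I should be careful about the exceptional pair $(q,N)=(2,6)$, which is precisely why $A_3(2)$ (where $N=6$) appears as an exception and must be handled separately. The main obstacle is organizing the case analysis so that no small-rank coincidence or exceptional isogeny (e.g.\ $B_2=C_2$, $A_3=D_3$, $A_1=B_1=C_1$, $G_2$ in characteristic $3$, $^2A_3 = {}^2D_3$) is missed, and making sure the twisted groups $^2A_\ell$, $^2D_\ell$, $^3D_4$, $^2E_6$, $^2B_2$, $^2G_2$, $^2F_4$ are each checked against their own order formulas rather than those of the untwisted relatives.
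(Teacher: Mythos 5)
Your proposal is correct and follows essentially the same route as the paper: isolate $A_1(q)$, $^2B_2(q^2)$, $^2G_2(q^2)$ and the Zsigmondy-exceptional case $q^m=2^6$ by hand, then for each remaining type use a Zsigmondy prime $\zeta_m(q)$ of $q^m-1=|G|_p-1$ and compare $m$ (the number of positive roots) with the degrees of the cyclotomic-type factors in $|G|$. The only detail to tighten is that the exceptional pair $(q,m)=(2,6)$ forces a separate check not just of $A_3(2)$ but also of $^2A_3(2)$ and $G_2(2)$ (the former fails, the latter is already covered by the generic $G_2(q)$ case), exactly as the paper does.
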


\begin{proof}
First, consider the groups of type ${}^ 2B_2(q^ 2)$ and ${}^ 2G_2(q^ 2)$. If $G={}^2B_2(q^2)$, where $q^ 2=2^{2n+1}$, then $|G|_2-1=q^4-1$ does not divide
$|G|=q^4(q^2-1)(q^4+1)$, as $\gcd(q^2+1,q^ 4+1)=1$. If $G={}^2G_2(q^2) $, where $q^ 2=3^ {2n+1}$, then $|G|_3-1=q^6-1$ does not divide
$|G|=q^6(q^2-1)(q^6+1)$, as  $\gcd(q^6-1,q^6+1)=2$.

Now, let $|G|_p=q^m$, with $G\not \in \{  {}^ 2B_2(q^ 2),{}^ 2G_2(q^ 2)\}$. 
We  start our analysis with the cases where the existence of a Zsigmondy prime $\zeta_m(q)$ is not guaranteed, i.e. $m\leq 2 $ or $(m,q)=(6,2)$. 
If $m\leq 2$, then  $G=A_1(q)$. In this case, $|G|_p-1=q-1$ divides $|G|=q(q^2-1)$.
If $(m,q)=(6,2)$ then $G$ is one of the following groups:
$A_3(2)$, $^2A_3(2)$, $G_2(2)$.
In this case, we can directly check  when $|G|_p-1$ divides $|G|$. This happens only when $G=A_3(2), G_2(2)$.

Hence, we may assume $m\geq 3$ and $(m,q)\neq (6,2)$. Under this assumption, a Zsigmondy prime $\zeta_m(q)$
exists, and we check when this prime divides $|G|$. We show  that this happens only for the groups of rank $2$ in the statement.

If $G$ is of type $A_n(q)$, then  $m=\frac{n(n+1)}{2}\geq 3$. Suppose that
$\zeta_m(q)$ divides $|G|$. Then $\frac{n(n+1)}{2}\leq n+1$, whence $n^2-n-2\leq 0$ and
so $n=2$. In this case, $|A_2(q)|_p-1=q^3-1$ divides $|G|$.
If $G$ is of type ${}^2 A_n(q)$, then $m=\frac{n(n+1)}{2}\geq 3$. Suppose that $\zeta_m(q)$ divides  $ |G|$. 
Then $\frac{n(n+1)}{2}\leq 2(n+1)$, whence  $n=3,4$. For $n=3$,  $|{}^ 2A_3(q)|_p-1=q^3-1$ does not divide $|G|=q^3(q^3+1)(q^2-1)$ since
 $\gcd(q^3-1,q^3+1)\leq 2$. For $n=4$, $|{}^ 2A_4(q)|_p-1=q^6-1$ and $|G|=q^6(q^4-1)(q^3+1)(q^2-1)$. Notice that $\zeta_3(q)$ does not divide 
 $(q^ 4-1)(q^2-1)$, whence $|G|_p-1$ does not divide $|G|$.

If $G$ is of type $B_n(q)$ or $C_n(q)$, then $m=n^2\geq 3$. The condition $\zeta_m(q)$ divides  $|G|$ implies  $n^2\leq 2n$ and so $n=2$. On the other hand, if $n=2$
then $|G|_p-1=q^4-1$ divides  $ |G|=q^4(q^4-1)(q^2-1)$.
If $G$ is of type $D_n(q)$, then $m=n^2-n\geq 3$. 
The condition $\zeta_m(q)$ divides  $ |G|$ implies $n^2-n\leq 2n-2$ and so $n=1,2$.
If $G$ is of type ${}^2D_n(q)$, then $m=n^2-n\geq 3$. If
$\zeta_m(q)$ divides  $ |G|$ then $n^2-n\leq 2n$ and so $n=3$. In this case, $|G|_p-1=q^6-1$ and $|G|=q^6(q^4-1)(q^2-1)(q^3+1)$. However,
the prime $\zeta_3(q)$ divides $|G|_p-1$, but does not divide $|G|$.

Similarly, if $G$ is of type $E_6(q)$, ${}^2E_6(q)$, $E_7(q)$,  $E_8(q)$ or
$F_4(q)$, then it is straightforward
to see that  $|G|_p-1$ does not divide $|G|$.
If $G$ is of type ${}^2F_4(q^2)$, where $q^ 2=2^{2n+1}$, then $\zeta_{6n+3}(2)$ divides $|G|_2-1$ but does not divide
$|G|$.
If $G$ is of type ${}^3D_4(q)$, then $m=12$. In this case,
$|G|=q^{12}\frac{(q^6-1)(q^{12}-1)}{(q^2+1)}$. Suppose that $|G|_p-1=q^{12}-1$ divides
$|G|$, then we obtain that $q^2+1$  divides  $q^6-1$, i.e. $\zeta_4(q) $ divides $q^6-1$,
which does not happen, since $4$ does not divide $6$. Finally,  if $G$ is of type
$G_2(q)$ then $|G|_p-1$ divides $|G|$.
\end{proof}

With the same techniques used in the proof of   the previous lemma, we can also prove
the following one.

\begin{lemma}\label{+1a}
Let $\G$ be a simple connected  reductive  group and let $G=\G^F$ be the
corresponding finite group. Then $|G|_p+1$ divides $|G|$ if, and only if, $G\in
\{A_1(q), {}^2A_2(q), {}^2B_2(q^2),{}^2G_2(q^2) \}$.
\end{lemma}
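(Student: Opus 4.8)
The plan is to follow the proof of Lemma~\ref{-1a} essentially verbatim, replacing $q^m-1$ by $q^m+1$ throughout, where $|G|_p=q^m$. The \emph{if} direction is a direct check: for $G=A_1(q)$ one has $|G|=q(q-1)(q+1)$; for $G={}^2A_2(q)$ one has $|G|=q^3(q^2-1)(q^3+1)$; for $G={}^2B_2(q^2)$ and $G={}^2G_2(q^2)$ one has $|G|=q^4(q^2-1)(q^4+1)$ and $|G|=q^6(q^2-1)(q^6+1)$, respectively; in each case the factor $|G|_p+1$ (namely $q+1$, $q^3+1$, $q^4+1$, $q^6+1$) visibly divides $|G|$.

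For the \emph{only if} direction the key remark is that $q^m+1$ divides $q^{2m}-1$ but is coprime to $q^m-1$, so whenever the Zsigmondy prime $\zeta_{2m}(q)$ exists it divides $q^m+1$; hence, if $q^m+1$ divides $|G|$, then $\zeta_{2m}(q)$ divides $|G|$. The prime $\zeta_{2m}(q)$ fails to exist only when $m=1$ or $(q,m)=(2,3)$. The case $m=1$ forces $|G|_p=q$, hence $G=A_1(q)$. The case $(q,m)=(2,3)$ occurs only for $G\in\{A_2(2),{}^2A_2(2)\}$, and one checks that $|G|_p+1=9$ divides $|{}^2A_2(2)|=216$ but not $|A_2(2)|=168$, in accordance with the statement. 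The Suzuki, Ree and ${}^2F_4$ groups, together with ${}^3D_4(q)$, are disposed of exactly as in Lemma~\ref{-1a}: ${}^2B_2(q^2)$ and ${}^2G_2(q^2)$ are already in the list by the computation above, while for ${}^2F_4(q^2)$ and ${}^3D_4(q)$ one exhibits a primitive prime divisor (of $q^{48}-1$, resp.\ of $q^{24}-1$) which divides $|G|_p+1$ but does not divide $|G|$.

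In all remaining cases $\zeta_{2m}(q)$ exists, and one uses that it has multiplicative order exactly $2m$ modulo itself: it divides a factor $q^j-1$ of $|G|$ only if $2m\mid j$, and it divides a factor $q^j+1$ of $|G|$ only if $m\mid j$ and $2m\nmid j$. In either case $|G|$ must contain such a factor with exponent $j$ bounded by the (twisted) Coxeter number of $\G$, and this is what rules out all types of rank at least $2$ save ${}^2A_2$. Concretely, for $A_n(q)$ with $n\ge2$ one would need $n(n+1)\le n+1$; for $B_n(q),C_n(q)$ one would need $2n^2\le 2n$; for $D_n(q)$ and ${}^2D_n(q)$ one would need essentially $n(n-1)\le n-1$; and for $G_2,F_4,E_6,{}^2E_6,E_7,E_8$ one checks directly that $2m$ exceeds every admissible exponent. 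For ${}^2A_n(q)$ with $n\ge2$ the only option is that $\zeta_{2m}(q)$ divides a factor $q^j+1$ with $m\mid j\le n+1$, which forces $\tfrac{n(n+1)}2\mid j\le n+1$, hence $n=2$; and ${}^2A_2(q)$ does occur, since $\zeta_6(q)$ divides the factor $q^3+1$ of $|SU_3(q)|$.

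I expect the difficulty here to be bookkeeping rather than conceptual: one has to record, for each (twisted) type, the exact value of $|G|_p=q^m$ together with the precise list of factors $q^j\pm1$ occurring in $|G|$, and to make sure that the small cases where $\zeta_{2m}(q)$ does not exist, as well as the Suzuki, Ree and ${}^2F_4$ groups (where the natural parameter is not an integer), are all handled by hand.
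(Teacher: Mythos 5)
Your proposal is correct and follows exactly the approach the paper intends: the paper gives no separate proof of Lemma~\ref{+1a}, stating only that it follows ``with the same techniques used in the proof of the previous lemma,'' and your argument is a correct instantiation of that Zsigmondy-prime method, with the right reduction (a primitive prime divisor of $q^{2m}-1$ must divide $q^m+1$) and correct handling of the exceptional small cases and of the Suzuki, Ree, ${}^2F_4$ and ${}^3D_4$ groups.
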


As previously remarked, if $G$ is quasi-simple then the Steinberg character is the only irreducible character of $p$-defect $0$. 
So we can now prove Theorem \ref{th1}.


\begin{proof}[Proof of Theorem \emph{\ref{th1}}]
Assume that  $\tau(1)>1$ and  $\tau(u)=a\neq
0$.
By Lemma \ref{22}, $a=\pm 1$ and $\tau$ is a regular  character of degree $
|G|_p\pm 1$.
If $a=-1$, by Lemma \ref{-1a} we are reduced to consider the following groups:
$SL_2(q), SL_3(q), SL_4(2), Sp_4(q)$ and $G_2(q)$. Since
$\chi=\tau+
1_G$  is a proper $Syl_p$-vanishing character, we may use
\cite{PZ}.
If $a=1$,  by Lemma \ref{+1a}, it suffices to consider the following groups:
$SL_2(q), SU_3(q), {}^2B_2(q^2)$ and ${}^2G_2(q^2)$.
For all these groups the result follows by 
 analysis of the character tables.
\end{proof}

\begin{proof}[Proof of Theorem \emph{\ref{th4}}]
As $\tau(1)=|G|_p\pm 1$ and $\tau(1)$ divides $|G|$, it follows by Lemmas \ref{-1a} and \ref{+1a}
that $G$ must be one of the following groups: $SL_2(q)$, $SL_3(q)$, $SL_4(2), SU_3(q), Sp_4(q), {}^2B_2(q^2), G_2(q), {}^2G_2(q^2)$. 
So, it suffices to inspect the character tables of these groups 
to identify the irreducible characters with the degrees in question. 
\end{proof}

\section{Sporadic groups}\label{Sspor}

The answer to Problem \ref{prob1}  for quasi-simple
sporadic groups can be obtained directly from
their character tables. We describe here the most interesting properties.

\begin{propo}\label{spor}
Let $G$ be a finite quasi-simple sporadic group and let $p$ be a prime dividing $|G|$. 
Let $\Delta_p(G)$ be the set of the non-linear irreducible characters of $G$ whose
$p$-defect is not $0$ and which are constant on $\varSigma_p(G)$.
\begin{itemize}
\item[(1)] For every $G$ there exists a prime $p$ such that the set $\Delta_p(G)$ is not
empty.
\item[(2)] There exists exactly one prime $p$ such that $\Delta_p\neq \emptyset$ if, and
only if, $p=7$ and $G=J_2,2.J_2$.
\item[(3)] The set $\Delta_2(G)$ is not empty if, and only if, $G=J_1$. In this case
$\Delta_2(J_1)=\{\tau\}$ where $\tau(1)=209$.
\item[(4)] $G$ has a character $\tau\in\Delta_p(G)$ such that $|\tau(g)|\neq 1$ $(g\in
\varSigma_p(G))$ if, and only if, $p=3$, $G\in\{M_{22},  2.M_{22}, 4.M_{22}\}$ and
$\tau(1)=385$. In these cases $\tau(g)=-2$.
\end{itemize}
\end{propo}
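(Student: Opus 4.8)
The plan is to verify Proposition~\ref{spor} essentially by inspection of the known character tables of the quasi-simple sporadic groups and their covering groups, organized so that the computations are finite and feasible. The key preliminary reduction is Lemma~\ref{cov}: a non-trivial $p$-constant irreducible character of non-zero $p$-defect either descends to the simple quotient (when $p\nmid|Z(G)|$) or forces a very restrictive direct-product structure with $p=2$ and $|Z(G)|=2$, which for quasi-simple sporadic groups never occurs since such $G$ is perfect. Hence it suffices to handle each simple sporadic group $S$ together with those covers $m.S$ with $m$ a $p$-power for the relevant $p$; for the remaining covers the faithful characters cannot be $p$-constant of non-zero defect by the same lemma. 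Throughout, Theorem~\ref{th2}(1) tells us that any candidate $\tau$ lies in the principal $p$-block, and when the Sylow $p$-subgroup is cyclic, Theorem~\ref{th2}(2) and the Brauer-tree description pin down exactly which characters of the principal block are $p$-constant (the non-exceptional ones for $p>2$, or all of them if there is a single exceptional character), with value $\pm1$ on $\varSigma_p(G)$. So the interesting work is concentrated on the primes $p$ with non-cyclic Sylow $p$-subgroup.

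For a fixed simple sporadic group $S$ and a prime $p$ with non-cyclic Sylow $p$-subgroup, I would proceed as follows. First list $\varSigma_p(S)$, i.e.\ the conjugacy classes of elements of order divisible by $p$; then for each non-linear $\chi\in\Irr S$ lying in the principal $p$-block, check whether $\chi$ is constant on that set of classes. Since the character tables (as in the GAP/Atlas libraries) are available, this is a finite check for each of the $26$ sporadic groups; the same is then repeated for the relevant $p$-power central extensions $m.S$, using the faithful characters. To prove (1) one exhibits, for each $G$, at least one prime $p$ and one such $\tau$: in practice the trivial character is always $p$-constant but linear, so one needs a genuine non-linear witness, and for the groups with some cyclic Sylow $p$-subgroup the Brauer-tree argument of Theorem~\ref{th2} produces one provided the principal block has a non-exceptional non-trivial character (which it does, since a Brauer tree with at least one edge and the trivial character at a node has another node). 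Statements (2), (3), (4) are then sharper census results: (2) asserts that for all $G$ other than $J_2$ and $2.J_2$ there are at least two such primes, and exactly one ($p=7$) for $J_2,2.J_2$; (3) isolates the unique case $G=J_1$, $p=2$, where the Sylow $2$-subgroup is elementary abelian of order $8$ and one finds exactly the degree-$209$ character constant on $\varSigma_2(J_1)$; (4) isolates the unique family $M_{22},2.M_{22},4.M_{22}$ at $p=3$ with the degree-$385$ character taking value $-2$ on the $3$-singular classes — note $4.M_{22}$ is included because $3$ does not divide $|Z|=4$, so Lemma~\ref{cov}(1) lets the character of $M_{22}$ be inflated.

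The main obstacle is not conceptual but organizational: it is the sheer volume of finite case-checking across all sporadic groups, all their covers, and all primes, which must be carried out carefully (ideally with computer algebra) to be sure no case is missed — in particular, one must double-check the groups with relatively small Sylow subgroups (such as $M_{11}, M_{12}, J_1, J_2, M_{22}$) where non-trivial $p$-constant characters are most likely to appear, and must correctly separate the cyclic-Sylow cases (handled uniformly by Theorem~\ref{th2}) from the non-cyclic ones (handled by direct table inspection). A secondary point requiring care is the treatment of the covering groups: one must confirm that for each sporadic $S$ and each prime $p$, the only $m.S$ contributing new faithful $p$-constant characters of non-zero defect are those with $p\mid m$ impossible (so that Lemma~\ref{cov}(1) applies and the character is just an inflation) versus $p\mid m$, where the faithful characters genuinely live and must be inspected; the assertion in (4) that all three of $M_{22},2.M_{22},4.M_{22}$ appear is exactly such a point, and the assertion that $3.M_{22}$ and $6.M_{22}$ do \emph{not} contribute a new character (beyond the inflation) is the kind of thing that must be explicitly verified against the tables. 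Finally, once all data is collected, one records the complete list and reads off (1)--(4); the value $-2$ in (4) is consistent with Lemma~\ref{21}, which guarantees the common value is an integer, and with Theorem~\ref{main}, where this is exactly the exceptional case singled out.
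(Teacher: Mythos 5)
Your proposal matches the paper's approach: the paper offers no argument beyond the assertion that the result "can be obtained directly from the character tables," and your plan is exactly that finite table inspection, organized by the same reductions (Lemma~\ref{cov} to pass to the simple quotient and dispose of the covers, Theorem~\ref{th2} for the cyclic-Sylow primes). One small caution: by Lemma~\ref{cov} a cover $m.S$ with $p\mid m$ has \emph{no} non-trivial $p$-constant character of positive defect at all (case (2) fails for perfect groups), so for $3.M_{22}$ at $p=3$ even the inflation of the degree-$385$ character fails to be $3$-constant, rather than merely "not contributing a new character beyond the inflation."
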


Finally we consider the simple group $G={}^2F_4(2)'$, which admits the following
$p$-constant
irreducible characters (in the notation of \cite{ATLAS}):
\begin{itemize}
\item[(1)] $p=3$: $\chi_{8}$ of degree $325$;
\item[(2)] $p=5$: $\chi_9$ of degree $351$ and $\chi_{12},\chi_{13}$ of degree $624$;
\item[(3)] $p=13$: $\chi_4,\chi_5$ of degree $27$, $\chi_7$ of degree $300$, $\chi_{15}$
of degree $675$ and $\chi_{20}$ of degree $1728$.
\end{itemize}
In all these cases, the characters take value $\pm 1$ on $\varSigma_p(G)$.

\section{Proof of Theorem \ref{main}}\label{sec6}

Let $G$ be a finite simple group and let $\tau\in \Irr(G)$ such that $\tau(g)=c$ for all
$g \in \varSigma_p(G)$, where $p$ is a prime dividing $|G|$. 
By Lemma \ref{21}, $c\in \ZZ$ and   $c=0$ precisely when $\tau$ is of
$p$-defect $0$. Also,
by Theorem \ref{th2}(2)
$c=\pm 1$ when $G$ has a cyclic Sylow
$p$-subgroup.
So, assume that $\tau$ is not of $p$-defect $0$ and that the Sylow
$p$-subgroups of $G$ are not cyclic.
If $G$ is an alternating group, then by Proposition \ref{th3} it follows that $c=\pm
1$. If $G$ is a sporadic group, from Proposition \ref{spor} we get that either $c=\pm 1$
or $G=M_{22}$, $c=-2$,  $p=3$ and $\tau(1)=385$.
Finally, for  groups of Lie type of characteristic $p$, the result follows from  Lemma
\ref{22}. This proves Theorem \ref{main}.

\section*{Acknowledgements}

We want to thank the referee for careful reading the manuscript and making a number of useful 
suggestions for improving it.

\end{document}